\newcommand{\DataPath}{.}
\pgfplotsset{compat=newest}
\pgfplotsset{plot coordinates/math parser=false}
\newcommand{\MyLineWidth}{0.65pt}
\newcommand{\MyLineWidthDotted}{1.0pt}
\newcommand{\MyMarkSize}{1.75pt}
\newcommand{\MyColorExact}{black}
\newcommand{\MyColorEsti}{blue}  
\newcommand{\MyColorAlert}{red}
\DeclareMathOperator{\spanM}{span}
\DeclareMathOperator{\tr}{tr}
\providecommand{\abs}[1]{\lvert#1\rvert}
\providecommand{\Abs}[1]{\left\lvert#1\right\rvert}
\providecommand{\norm}[1]{\lVert#1\rVert}
\providecommand{\Norm}[1]{\left\lVert#1\right\rVert}
\providecommand{\dx}{\ d\boldsymbol{x}}
\providecommand{\dxi}{\ d\boldsymbol{\xi}}
\providecommand{\FDF}{(F_K')^{-1} \D_K (F_K')^{-T}}
\newcommand{\D}{\mathbb{D}}
\newcommand{\cO}{\mathcal{O}}
\newcommand{\R}{\mathbb{R}}
\newcommand{\cT}{\mathcal{T}}
\newcommand{\V}[1]{\boldsymbol{#1}}
\newtheorem{theorem}{Theorem}[section]
\newtheorem{lemma}[theorem]{Lemma}
\theoremstyle{definition}
\newtheorem{example}[theorem]{Example}
\newtheorem{scase}[theorem]{Special Case}
\theoremstyle{remark}
\newtheorem{remark}[theorem]{Remark}
\begin{document}

\title{Conditioning of Finite Element Equations\\
   with Arbitrary Anisotropic Meshes}

\author{Lennard Kamenski\thanks{Department of Mathematics, the University of Kansas,
   Lawrence, KS 66045 (\href{mailto:lkamenski@math.ku.edu}{\nolinkurl{lkamenski@math.ku.edu}})},
\and Weizhang Huang\thanks{Department of Mathematics, the University of Kansas,
   Lawrence, KS 66045 (\href{mailto:huang@math.ku.edu}{\nolinkurl{huang@math.ku.edu}})},
\and Hongguo Xu\thanks{Department of Mathematics, the University of Kansas,
   Lawrence, KS 66045 (\href{mailto:xu@math.ku.edu}{\nolinkurl{xu@math.ku.edu}})}
}

\begin{titlepage}
\maketitle
\begin{abstract}
Bounds are developed for the condition number of the linear finite element equations of an anisotropic diffusion problem with arbitrary meshes. 
They depend on three factors.
The first, factor proportional to a power of the number of mesh elements, represents the condition number of the linear finite element equations for the Laplacian operator on a uniform mesh.
The other two factors arise from the mesh nonuniformity viewed in the Euclidean metric and in the metric defined by the diffusion matrix.
The new bounds reveal that the conditioning of the finite element equations with adaptive anisotropic meshes is much better than what is commonly feared.
Diagonal scaling for the linear system and its effects on the conditioning are also studied.
It is shown that the Jacobi preconditioning, which is an optimal diagonal scaling for a symmetric positive definite sparse matrix, can eliminate the effects of mesh nonuniformity viewed in the Euclidean metric and reduce those effects of the mesh viewed in the metric defined by the diffusion matrix.
Tight bounds on the extreme eigenvalues of the stiffness and mass matrices are obtained.
Numerical examples are given.
\end{abstract}
\begin{description}
\item[Keywords:] mesh adaptation, anisotropic mesh, finite element,
      mass matrix, stiffness matrix, conditioning, extreme eigenvalues,
      preconditioning, diagonal scaling
\item[2010 MSC:] 65N30, 65N50, 65F35, 65F15
\end{description}
\end{titlepage}

\section{Introduction}
\label{sect:introduction}

It has been amply demonstrated that significant improvements in accuracy can be gained when an appropriately chosen anisotropic mesh is used for the numerical solution of problems exhibiting anisotropic features.
However, there exists a general concern in the scientific computing community that an anisotropic mesh, which can contain elements of large aspect ratio and small volume, may lead to ill-conditioned linear systems and this could outweigh the accuracy and efficiency improvements gained by anisotropic mesh adaptation.
For isotropic mesh adaptation, Bank and Scott~\cite{BanSco89} (also see Brenner and Scott~\cite{BreSco08}) show that after proper diagonal scaling, the condition number of finite element equations with an adaptive mesh is essentially the same as that for a uniform mesh.
Unfortunately, this result does not apply to anisotropic meshes nor to problems with anisotropic diffusion.

For problems with anisotropic diffusion and arbitrary meshes, several estimates have been developed for the extreme eigenvalues of the stiffness matrix.
For example, Fried~\cite{Fried73} shows that the largest eigenvalue of the stiffness matrix is bounded by the largest eigenvalues of element stiffness matrices.
Shewchuk~\cite{Shewch02} obtains sharp  bounds on largest eigenvalues of element stiffness matrices for linear triangular and tetrahedral finite elements.
More recently, Du et al.~\cite{DuWaZh09} develop a bound that can be viewed as a generalization of Shewchuk's result to general dimensions and simplicial finite elements.

Estimation of the smallest eigenvalue for the general case appears to be more challenging.
Standard estimates (e.g., see Ern and Guermond~\cite{Ern04}) are linearly proportional to the volume of the smallest mesh element, which is typically too pessimistic for nonuniform meshes.
Moreover, Apel~\cite[Sect.~4.3.3]{Apel99} shows that the order of the smallest eigenvalue of the stiffness matrix for a specific, specially designed anisotropic mesh is the same as for a uniform mesh.
As a matter of fact, coefficient adaptive anisotropic meshes can even improve the conditioning for partial differential equations (PDEs) with anisotropic diffusion coefficients, as observed by D'Azavedo et al.~\cite{DARoDo97} and Shewchuk~\cite[Sect.~3.2]{Shewch02}.
A noticeable approach for obtaining sharper bounds for the smallest eigenvalue is proposed by Fried~\cite{Fried73}.
The approach employs a continuous generalized eigenvalue problem with an auxiliary density function and its key is to find a lower bound for the smallest eigenvalue of the continuous problem.
Bounds for the smallest eigenvalue of the stiffness matrix obtained with Fried's approach are valid for general meshes in any dimension but in $d \geq 3$ dimensions they are less sharp than those obtained in this paper.

The objective of this paper is threefold.
First, we develop tight bounds on the extreme eigenvalues and the condition number of the stiffness matrix for a general diffusion problem with an arbitrary anisotropic mesh.
No assumption on the shape or size of mesh elements is made in the development.
Our upper bound on the largest eigenvalue can be also expressed in terms of mesh nonuniformity viewed in the metric tensor defined by the diffusion matrix (which will hereafter be referred to as \emph{the mesh $\D$-nonuniformity}). 
It is comparable to those of Shewchuk~\cite{Shewch02} and Du et al.~\cite{DuWaZh09} but is expressed as a sum of patch-wise terms instead of element-wise terms as in the aforementioned references.
The patch-wise nature gives a sharper bound and makes it more convenient to use in the development of diagonal scaling preconditioners.
To obtain lower bounds for the smallest eigenvalue of the stiffness matrix we extend Bank and Scott's result~\cite{BanSco89} to arbitrary meshes.
This generalization is not trivial and special effort has to be made to deal with the arbitrariness of the mesh.
Along the way we establish anisotropic upper and lower bounds on the extreme eigenvalues of the mass matrix which are much tighter than estimates available in the literature.

The second objective of the paper is to provide a clear geometric interpretation for the obtained bounds on the condition number of the stiffness matrix. 
These bounds are shown to depend on three factors.
The first factor is proportional to a power of the number of mesh elements and represents the condition number of the stiffness matrix for the linear finite element approximation of the Laplacian operator on a uniform mesh.
The other two factors arise from the mesh nonuniformity in volume measured in the Euclidean metric (which will be referred to \emph{the mesh volume-nonuniformity}) and from the mesh $\D$-nonuniformity.

The third objective is to study diagonal scaling for the finite element linear system and its effects on the conditioning.
We focus on the scaling with the diagonal entries of the matrix (Jacobi preconditioning) since it is an optimal diagonal scaling for a symmetric positive definite sparse matrix~\cite[Corollary~7.6 and the following]{Higham96}.
We show that the Jacobi preconditioning can eliminate the effects of the mesh volume-nonuniformity and improve those caused by the mesh $\D$-nonuniformity, thus significantly reducing the effects of the mesh irregularity on the conditioning.
From the practical point of view, this result indicates that a simple diagonal preconditioning can effectively transform the stiffness matrix into a matrix which has a comparable condition number as the one with a uniform mesh.

The outline of the paper is as follows.
Section~\ref{sect:discretization} briefly describes a linear finite element discretization of a general anisotropic diffusion problem.
Estimation of the extreme eigenvalues and the condition number of the mass matrix is given in Sect.~\ref{sect:massMatrix}.
Section~\ref{sect:lambdaMax} deals with the estimation of the largest eigenvalue of the stiffness matrix.
Bounds on the smallest eigenvalue and the condition number of the stiffness matrix and the effects of diagonal scaling are investigated in Sect.~\ref{sect:lambdaMinSobolev}.
A selection of examples in Sect.~\ref{sect:numericalExperiments} provides a numerical validation for the theoretical findings.
Finally, conclusions and further remarks are given in Sect.~\ref{sect:summary}. 

\section{Linear finite element approximation}
\label{sect:discretization}
We consider the boundary value problem (BVP) of a general diffusion differential equation in the form 
\begin{equation}
   \begin{cases}
      - \nabla \cdot \left( \D \nabla u \right) = f, & \text{in $\Omega$}, \\
      u = 0,                            & \text{on $\partial\Omega$},
   \end{cases}
   \label{eq:bvp1}
\end{equation}
where $\Omega$ is a simply connected polygonal or polyhedral domain in $\mathbb{R}^d$ ($d \geq 1$) and $\D = \D (\V{x})$ is the diffusion matrix. 
We assume that $\D$ is symmetric and positive definite and there exist two positive constants $d_{\min}$ and $d_{\max}$ such that
\begin{equation}
   d_{\min} I \leq \D(\V{x}) \leq d_{\max} I, \quad \forall \V{x} \in \Omega,
   \label{eq:D:1}
\end{equation}
where the less-than-or-equal sign means that the difference between the right-hand side and left-hand side terms is positive semidefinite.

We are interested in the linear finite element solution of BVP  \eqref{eq:bvp1}. 
Assume that an affine family $\lbrace \cT_h \rbrace$ of simplicial decompositions  of $\Omega$ is given and denote the associated linear finite element space by $V^h \subset H_0^1(\Omega)$. 
A linear finite element solution $u_h \in V^h$ to BVP \eqref{eq:bvp1} is defined by
\[
   \int_{\Omega} \nabla v_h \cdot \D \nabla u_h \dx
      = \int_{\Omega} f v_h \dx, \quad \forall v_h \in V^h ,
\]
or
\[
   \sum_{K \in \cT_h} \int_{K} \nabla v_h \cdot  \D \nabla u_h \dx 
      = \sum_{K \in \cT_h}  \int_{K} f v_h \dx , \quad \forall v_h \in V^h.
\]
Since both $\nabla u_h$ and $\nabla v_h$ are constant on $K$, we can rewrite the above equation as
\begin{equation}
   \sum_{K \in \cT_h} \Abs{K} \nabla v_h \cdot  \D_K  \nabla u_h 
   = \sum_{K \in \cT_h}  \int_{K} f  v_h \dx, 
      \quad \forall v_h \in V^h,
   \label{fem-3}
\end{equation}
where $\D_K$ is the integral average of $\D$ over $K$, i.e.,
\begin{equation}
   \D_K = \frac{1}{\Abs{K}} \int_K \D(\V{x}) \dx .
   \label{eq:D:2}
\end{equation}
In practice, the integrals in \eqref{fem-3} and \eqref{eq:D:2} have to be approximated numerically via a quadrature rule.
Although this will change the definition of $\D_K$ and the right-hand side term of \eqref{fem-3} slightly, the procedure and the results in this paper will remain valid for this situation.

Finite element equation \eqref{fem-3} can be expressed in a matrix form.
Denoting the numbers of elements and interior vertices of $\cT_h$ by $N$ and $N_{vi}$ and assuming that the vertices are ordered in such a way that the first $N_{vi}$ vertices are the interior vertices, we have
\begin{align}
   V^h &= \spanM \lbrace \phi_1, \dotsc, \phi_{N_{vi}} \rbrace, \nonumber \\
   u_h &= \sum_j u_j \phi_j ,
   \label{uh-1}
\end{align}
where $\phi_j$ is the linear basis function associated with the $j^{\text{th}}$ vertex.
In \eqref{uh-1} and hereafter, we use the sum $\sum_j$ with the index $j$ ranging over all interior vertices, i.e.,
\[
\sum_j = \sum_{j=1}^{N_{vi}} 
\]

Substituting \eqref{uh-1} into \eqref{fem-3} and taking $v_h = \phi_i$ ($i=1,\dotsc, N_{vi}$), we obtain the linear algebraic system
\[
   A \V{u} = \V{f},
\]
where $\V{u} = [u_1,\dotsc, u_{N_{vi}}]^T$ and the stiffness matrix $A$ and the right-hand side term $\V{f}$ are given by
\begin{align}
   A_{ij} &= \sum_{K \in \cT_h} \Abs{K} \nabla\phi_i\vert_K \cdot \D_K \nabla \phi_j \vert_K,
      & i, j &= 1,\dotsc, N_{vi},
   \label{eq:Aij} \\ 
   f_i &= \sum_{K \in \cT_h}  \int_{K} f \phi_i \dx, 
      & i &= 1,\dotsc, N_{vi} ,
      \notag
\end{align}
and $\nabla \phi_i \vert_K$ and $\nabla \phi_j \vert_K$ denote the restriction of $\nabla \phi_i$ and $\nabla \phi_j$ on $K$.
Our main goal is to estimate the condition number of stiffness matrix $A$.

\section{Mass matrix}
\label{sect:massMatrix}

To start with, we consider the element mass matrix $\hat B$ for the reference element $\hat{K}$ (which is assumed to be unitary, i.e., $\abs{\hat{K}} = 1$),
\[
   \hat B = (\hat b_{i,j} ),
   \quad \hat b_{i,j} = \int_{\hat K} \hat \phi_i  \hat \phi_j \dxi,
   \quad i,j = 1,\dotsc, d+1,
\]
where $\hat \phi_i$'s are the linear basis functions associated with the vertices of $\hat K$.
Matrix $\hat B$ is symmetric and positive definite.
Moreover, for the standard linear finite elements considered in this paper we have
\[ 
   \frac{1}{(d+1)(d+2)} I \leq \hat B \leq \frac{1}{d+1} I.
\]

\subsection{Condition number of the mass matrix}
\label{SEC:mass-matrix}

Consider the global mass matrix
\[
    B = (B_{i,j}), 
    \quad B_{i,j} = \int_\Omega \phi_i \phi_j \dx,
    \quad i, j  = 1,\dotsc, N_{vi}.
\]
Notice that
\begin{equation}
   B_{jj} 
      = \int_\Omega \phi_j^2 \dx 
      = \sum_{K \in \omega_j} \int_K \phi_j^2 \dx
      = \sum_{K \in \omega_j} \frac{2 \Abs{K}}{(d+1)(d+2)} 
      = \frac{2 \Abs{\omega_j}}{(d+1)(d+2)},
   \label{eq:B:diag:1}
\end{equation}
where $\omega_j$ is the element patch associated with the $j^{\text{th}}$ vertex and $\Abs{\omega_j}$  is its volume.

The following theorem gives lower and upper bounds on the condition number of the mass matrix for any dimension and any mesh.

\begin{theorem}[Condition number of the mass matrix]
\label{thm:conditionMM}
The condition number of the mass matrix for the linear finite elements on a simplicial mesh is bounded by
\begin{equation}
   \frac{\max_j B_{jj}}{ \min_j B_{jj}}
   \le \kappa(B) \le 
   (d+2) \frac{\max_j B_{jj}}{ \min_j B_{jj}} .
   \label{eq:massMatrixBound:1}
\end{equation}
\end{theorem}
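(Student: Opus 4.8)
The plan is to sandwich the mass matrix $B$ between two multiples of its own diagonal part $D = \diag(B_{11}, \dotsc, B_{N_{vi} N_{vi}})$ and then translate such a sandwich into bounds on the extreme eigenvalues. The reference-element estimate $\frac{1}{(d+1)(d+2)} I \le \hat B \le \frac{1}{d+1} I$ and the explicit diagonal formula \eqref{eq:B:diag:1} are exactly the two ingredients needed.

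First I would assemble the global quadratic form element by element. For any nodal vector $\V{v} = [v_1, \dotsc, v_{N_{vi}}]^T$ we have $\V{v}^T B \V{v} = \sum_{K \in \cT_h} \V{v}_K^T B_K \V{v}_K$, where $\V{v}_K$ collects the components of $\V{v}$ at the vertices of $K$ (boundary vertices contributing zero since they are not degrees of freedom) and $B_K = \Abs{K} \hat B$ is the element mass matrix obtained from $\hat B$ by the affine map onto $K$ (the factor $\Abs{K}$ arising from $\abs{\hat K} = 1$ and the change of variables). Applying the reference bounds to each local form gives
\[
   \frac{\Abs{K}}{(d+1)(d+2)} \abs{\V{v}_K}^2
   \le \V{v}_K^T B_K \V{v}_K
   \le \frac{\Abs{K}}{d+1} \abs{\V{v}_K}^2,
\]
and summing over $K$ collapses the element--vertex double sum into $\sum_{K} \Abs{K} \abs{\V{v}_K}^2 = \sum_j \Abs{\omega_j} v_j^2$. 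Using \eqref{eq:B:diag:1} to replace $\Abs{\omega_j}$ by $\tfrac{(d+1)(d+2)}{2} B_{jj}$, this yields
\[
   \tfrac{1}{2} \V{v}^T D \V{v} \le \V{v}^T B \V{v} \le \tfrac{d+2}{2} \V{v}^T D \V{v}
   \qquad \text{for all } \V{v},
\]
i.e.\ $\tfrac{1}{2} D \le B \le \tfrac{d+2}{2} D$ in the positive semidefinite ordering.

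From this two-sided bound the upper estimate follows immediately: since $D$ is diagonal and positive, its Rayleigh quotient attains $\max_j B_{jj}$ and $\min_j B_{jj}$, so $\lambda_{\max}(B) \le \tfrac{d+2}{2}\max_j B_{jj}$ and $\lambda_{\min}(B) \ge \tfrac12 \min_j B_{jj}$, whence $\kappa(B) = \lambda_{\max}(B)/\lambda_{\min}(B) \le (d+2)\max_j B_{jj}/\min_j B_{jj}$. For the lower estimate I would invoke the elementary interlacing of diagonal entries with the spectrum of a symmetric matrix: testing the Rayleigh quotient with the standard basis vector $\V{e}_j$ gives $\lambda_{\min}(B) \le B_{jj} \le \lambda_{\max}(B)$ for every $j$, hence $\lambda_{\max}(B) \ge \max_j B_{jj}$ and $\lambda_{\min}(B) \le \min_j B_{jj}$, so that $\kappa(B) \ge \max_j B_{jj}/\min_j B_{jj}$.

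The only place requiring genuine care --- and the main obstacle for an arbitrary mesh --- is the bookkeeping in the assembly step: one must verify that the element-wise bounds transfer cleanly through the change of variables (the scaling $B_K = \Abs{K}\hat B$ is uniform across all elements precisely because $\hat B$ is mesh-independent) and that the reordering of the double sum over elements and local vertices correctly reproduces the patch volumes $\Abs{\omega_j}$, with boundary vertices dropping out consistently. Once this identity is secured, no shape- or size-regularity assumption on the mesh enters anywhere, which is exactly why the bound is valid for arbitrary meshes in any dimension.
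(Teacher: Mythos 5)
Your proposal is correct and follows essentially the same route as the paper's proof: decompose the quadratic form element-wise, apply the reference-element bounds $\frac{1}{(d+1)(d+2)} I \le \hat B \le \frac{1}{d+1} I$ with the scaling $B_K = \Abs{K}\hat B$, reassemble by vertex patches using \eqref{eq:B:diag:1} to obtain $\tfrac{1}{2}\min_j B_{jj} \le \lambda_{\min}(B)$ and $\lambda_{\max}(B) \le \tfrac{d+2}{2}\max_j B_{jj}$, and test with canonical basis vectors for the reverse inequalities. Your restatement of the patch identity as the two-sided positive semidefinite sandwich $\tfrac{1}{2} D \le B \le \tfrac{d+2}{2} D$ is only a cosmetic repackaging of the paper's argument, not a different method.
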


\begin{proof}

For an element $K$, let $\V{u}_K$ be the restriction of the vector $\V{u}$ on $K$ and $B_K$ the element mass matrix.
Then, 
\[
   \V{u}^T B \V{u}
   = \sum_{K\in\cT_h} \V{u}_K^T B_K \V{u}_K
   = \sum_{K\in\cT_h} \Abs{K} \V{u}_K^T \hat B \V{u}_K 
   \leq \frac{1}{d+1} \sum_{K\in\cT_h} \Abs{K} \Norm{\V{u}_K}_2^2 .
\] 
Rearranging the sum on the right-hand side according to the vertices and using \eqref{eq:B:diag:1},
\[
   \V{u}^T B \V{u}  
      \le \frac{1}{d+1} \sum_{K\in\cT_h} \Abs{K} \Norm{\V{u}_K}_2^2 
      = \frac{1}{d+1} \sum_{j} u_j^2 \Abs{\omega_j}
      = \frac{d+2}{2} \sum_{j} u_j^2 B_{jj},
\]
which implies
\[
\lambda_{\max}(B) \le \frac{d+2}{2} \max_{j} B_{jj} .
\]
Similarly, we have
\[
   \V{u}^T B \V{u} \geq  \frac{1}{2} \sum_{j} u_j^2 B_{jj}
   \quad \text{and}\quad 
   \lambda_{\min}(B) \geq \frac{1}{2} \min_j B_{jj} .
\]
Moreover, it is easy to show that
\[
   \lambda_{\max}(B) \geq \max_j B_{jj}
   \quad \text{and} \quad
   \lambda_{\min}(B) \leq \min_j B_{jj}.
\]
Combining the above estimates gives
\begin{align*}
   \max_j B_{jj}  &\le \lambda_{\max}(B)   \le \frac{d+2}{2} \max_j B_{jj}, \\
   \frac{1}{2} \min_j B_{jj}   &\le \lambda_{\min}(B)  \leq  \min_j B_{jj},
\end{align*}
from which estimate \eqref{eq:massMatrixBound} follows.
\end{proof}

From \eqref{eq:B:diag:1}, Theorem~\ref{thm:conditionMM} implies  
\begin{equation}
   \frac{\Abs{\omega_{\max}}}{\Abs{\omega_{\min}}} 
   \leq \kappa(B)
   \leq (d+2) \frac{\Abs{\omega_{\max}}}{\Abs{\omega_{\min}}},
   \label{eq:massMatrixBound}
\end{equation}
where $\Abs{\omega_{\max}} = \max_j \Abs{\omega_j}$ and $\Abs{\omega_{\min}} = \min_j \Abs{\omega_j}$.
If the mesh is quasi-uniform, this means ${\Abs{\omega_{\max}}}/{\Abs{\omega_{\min}}} = \cO(1)$ and $\kappa(B) = O(1)$.
On the other hand, ${\Abs{\omega_{\max}}}/{\Abs{\omega_{\min}}}$  and $\kappa(B)$ can become large
for nonuniform meshes.

\subsection{Relation to the estimates in the literature}
\label{rem:massStandardEstimates}
If we denote the maximum number of mesh elements in a patch by $p_{\max}$, then
\[
   \Abs{K_{\min}} \leq \Abs{\omega_j} \leq p_{\max} \Abs{K_{\max}}, \quad \forall j = 1,\dotsc, N_{vi} 
\]
and estimate \eqref{eq:massMatrixBound} implies
\begin{equation}
   \kappa(B) \leq (d+2) p_{\max} \frac{\Abs{K_{\max}}}{\Abs{K_{\min}}},
   \label{eq:kappaB:Fried}
\end{equation}
which is the bound obtained by Fried~\cite[inequality~(24)]{Fried73}.

Moreover, for an isotropic mesh,
\[
   \Abs{K_{\max}} \propto h_{\max}^d , \quad \Abs{K_{\min}} \propto h_{\min}^d,
\]
where $h_{\max}$ and $h_{\min}$ are the largest and smallest element diameters.
Substituting this into bound \eqref{eq:kappaB:Fried} gives
\begin{equation}
   \kappa(B) \leq C \left( \frac{h_{\max}}{h_{\min}} \right)^d,
   \label{eq:kappaB:standard}
\end{equation}
which is precisely the standard estimate found in the literature (e.g.,~\cite[Rem.~9.10]{Ern04}).

For anisotropic meshes, on the other hand, the new estimate \eqref{eq:massMatrixBound} is much tighter than both Fried's estimate \eqref{eq:kappaB:Fried} and the standard estimate \eqref{eq:kappaB:standard}, since large ${\Abs{K_{\max}}}/{\Abs{K_{\min}}}$ and ${h_{\max}}/{h_{\min}}$ do not necessarily imply large ${\Abs{\omega_{\max}}} / {\Abs{\omega_{\min}}}$ for those meshes.\footnote{For example, meshes in Fig.~\ref{fig:exampleN} have ${\Abs{K_{\max}}}/{\Abs{K_{\min}}} \to \infty$ but ${\Abs{\omega_{\max}}}/{\Abs{\omega_{\min}}} = \cO(1)$.}
Furthermore, estimate \eqref{eq:massMatrixBound} also provides a tight lower bound, which is not available with \eqref{eq:kappaB:Fried} and \eqref{eq:kappaB:standard}.

\subsection{Diagonal scaling for the mass matrix}
\label{ssect:massMatrixScaled}

It is known~\cite[Corollary~7.6 and the following]{Higham96} that for a symmetric positive definite sparse matrix, scaling by its diagonal entries (Jacobi preconditioning) is an optimal diagonal preconditioning (up to a constant depending on the maximum number of non-zeroes per column and row of the matrix).
We are interested in a bound on the condition number after such preconditioning.

For a diagonal scaling $S = (s_j)$, similarly to Theorem~\ref{thm:conditionMM} we obtain
\[
\frac{\max_j s^{-2}_j B_{jj}}{\min_j s^{-2}_j B_{jj}} 
      \leq \kappa(S^{-1}BS^{-1}) \leq (d+2) \frac{\max_j s^{-2}_j B_{jj}}{\min_j s^{-2}_j B_{jj}}
\]
and, for the Jacobi preconditioning $s_j^2 = B_{jj}$, we have arrived at the following theorem by Wathen~\cite{Wathen87} who studies the effects of the diagonal scaling on the condition number of the Galerkin mass matrix.
\begin{theorem}[{\cite[Table~1]{Wathen87}}]
\label{thm:scaledMM}
The condition number of the Jacobi preconditioned Galerkin mass matrix with a simplicial mesh has a mesh-independent bound\[
   \kappa (S^{-1}BS^{-1}) \leq d + 2.
\]
\end{theorem}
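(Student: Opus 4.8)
The plan is to obtain the theorem as an immediate specialization of the general diagonal-scaling bound displayed just above its statement, namely
\[
\frac{\max_j s_j^{-2} B_{jj}}{\min_j s_j^{-2} B_{jj}}
\le \kappa(S^{-1}BS^{-1})
\le (d+2)\, \frac{\max_j s_j^{-2} B_{jj}}{\min_j s_j^{-2} B_{jj}},
\]
which I would take as already established. The key observation is that the Jacobi choice $s_j^2 = B_{jj}$ normalizes every diagonal entry of the scaled matrix to $1$: indeed $s_j^{-2} B_{jj} = B_{jj}/B_{jj} = 1$ for all $j = 1,\dotsc,N_{vi}$, since all diagonal entries of $B$ are strictly positive by \eqref{eq:B:diag:1} and $S$ is therefore a well-defined invertible scaling.

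With this substitution the mesh-dependent factor collapses, because $\max_j s_j^{-2} B_{jj} = \min_j s_j^{-2} B_{jj} = 1$ and hence their quotient equals $1$. Inserting this into the upper bound gives $\kappa(S^{-1}BS^{-1}) \le (d+2)\cdot 1 = d+2$, which is the claimed estimate; the matching lower bound degenerates to $1$, so in fact $1 \le \kappa(S^{-1}BS^{-1}) \le d+2$ independently of the mesh. The essential point is geometric: whereas the unscaled condition number of Theorem~\ref{thm:conditionMM} is governed through \eqref{eq:massMatrixBound} by the patch-volume ratio $\Abs{\omega_{\max}}/\Abs{\omega_{\min}}$, Jacobi scaling divides each nodal equation by its own diagonal weight $B_{jj} = 2\Abs{\omega_j}/((d+1)(d+2))$ and thereby removes the dependence on the nonuniform patch volumes entirely.

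There is no genuine obstacle in the specialization itself; the only substantive work is to justify the general scaling bound. For that I would note that $S^{-1}BS^{-1}$ is again a symmetric positive definite Galerkin mass matrix, namely the one associated with the rescaled basis functions $\phi_j/s_j$, so that its $(i,j)$ entry equals $\int_\Omega (\phi_i/s_i)(\phi_j/s_j)\dx$ and its $j$-th diagonal entry equals $s_j^{-2} B_{jj}$. Repeating the four one-line estimates from the proof of Theorem~\ref{thm:conditionMM} (the upper and lower bounds on $\lambda_{\max}$ and on $\lambda_{\min}$) with $B_{jj}$ replaced throughout by $s_j^{-2} B_{jj}$ yields the displayed two-sided bound, after which the Jacobi substitution $s_j^2 = B_{jj}$ completes the proof.
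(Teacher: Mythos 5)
Your proposal is correct and follows essentially the same route as the paper, which likewise derives the two-sided bound for a general diagonal scaling $S$ by repeating the argument of Theorem~\ref{thm:conditionMM} and then substitutes the Jacobi choice $s_j^2 = B_{jj}$ so that the ratio $\max_j s_j^{-2}B_{jj}/\min_j s_j^{-2}B_{jj}$ collapses to $1$. Your added justification via the rescaled basis functions $\phi_j/s_j$ is a fine way to make the paper's ``similarly to Theorem~\ref{thm:conditionMM}'' step explicit.
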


Theorems~\ref{thm:conditionMM} and~\ref{thm:scaledMM} show that \emph{the mesh volume-nonuniformity has a significant effect} on the condition number of the mass matrix and \emph{this effect is completely eliminated} by the Jacobi preconditioning.

As we will see later in Sect.~\ref{sect:lambdaMinSobolev}, diagonal scaling plays a similar role in reducing the effects of mesh nonuniformity on the condition number of the stiffness matrix.

\section{Largest eigenvalue of the stiffness matrix}
\label{sect:lambdaMax}

The following lemma is valid for any dimension.

\begin{lemma}[Largest eigenvalue]
\label{lem:lambdaMax}
The largest eigenvalue of the stiffness matrix $A = (A_{ij})$ for the linear finite element approximation of BVP \eqref{eq:bvp1} is bounded by
\begin{equation}
   \max\limits_{j} A_{jj} \leq \lambda_{\max}(A)  \leq (d+1) \max\limits_{j} A_{jj}.
   \label{eq:lambdaMaxA}
\end{equation}

The largest eigenvalue of the diagonally (Jacobi) preconditioned stiffness matrix $S^{-1}AS^{-1}$ has a mesh-independent bound
\begin{equation}
   1 \leq \lambda_{\max}(S^{-1}AS^{-1} ) \leq d + 1.
   \label{eq:lambdaMaxScaled}
\end{equation}

\end{lemma}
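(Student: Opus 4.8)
The plan is to mirror the proof of Theorem~\ref{thm:conditionMM} for the mass matrix, exploiting that the stiffness matrix decomposes into element contributions and that each element stiffness matrix $A_K$ is symmetric positive semidefinite. Writing $\V{u}_K$ for the restriction of $\V{u}$ to the vertices of $K$ and $A_K = (\Abs{K}\,\nabla\phi_i\vert_K \cdot \D_K \nabla\phi_j\vert_K)$ for the $(d+1)\times(d+1)$ element stiffness matrix, I would start from
\[
   \V{u}^T A \V{u} = \sum_{K \in \cT_h} \V{u}_K^T A_K \V{u}_K .
\]
The lower bound $\max_j A_{jj} \le \lambda_{\max}(A)$ is then immediate by testing the Rayleigh quotient with the standard basis vector $\V{e}_j$, since $\V{e}_j^T A \V{e}_j = A_{jj}$.

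For the upper bound the key step is to estimate each element form $\V{u}_K^T A_K \V{u}_K$ by a diagonally weighted sum. Because $A_K$ is positive semidefinite, every $2\times 2$ principal minor is nonnegative, so $\Abs{(A_K)_{ij}} \le \sqrt{(A_K)_{ii}\,(A_K)_{jj}}$; combining this with the Cauchy--Schwarz inequality yields
\[
   \V{u}_K^T A_K \V{u}_K
      \le \Bigl( \sum_{i} \sqrt{(A_K)_{ii}}\,\Abs{u_i} \Bigr)^2
      \le (d+1) \sum_{i} (A_K)_{ii}\, u_i^2 ,
\]
where the sums run over the $d+1$ vertices of $K$. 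This is the crux of the argument: it replaces the quadratic form by the local diagonal entries weighted by $u_i^2$, which is exactly the quantity that reassembles across a patch. Summing over elements and rearranging the sum by vertices as in the derivation of \eqref{eq:B:diag:1}, and using $A_{jj} = \sum_{K \in \omega_j} (A_K)_{jj}$ (which holds since $\nabla\phi_j\vert_K$ vanishes unless $K \in \omega_j$), gives
\[
   \V{u}^T A \V{u}
      \le (d+1) \sum_j A_{jj}\, u_j^2
      \le (d+1) \max_j A_{jj}\, \Norm{\V{u}}_2^2 ,
\]
from which \eqref{eq:lambdaMaxA} follows.

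Finally, for the Jacobi-preconditioned matrix I would observe that the whole argument used only symmetry and positive semidefiniteness of the element matrices, never their zero-row-sum structure. Hence it applies verbatim to $S^{-1} A S^{-1} = \sum_{K} S_K^{-1} A_K S_K^{-1}$, whose element blocks remain positive semidefinite by congruence. Since the Jacobi scaling $s_j^2 = A_{jj}$ forces every diagonal entry of $S^{-1} A S^{-1}$ to equal $1$, the bound \eqref{eq:lambdaMaxA} applied to $S^{-1}AS^{-1}$ reads $1 \le \lambda_{\max}(S^{-1}AS^{-1}) \le d+1$, which is \eqref{eq:lambdaMaxScaled}. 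The main obstacle is recognizing that the element-wise eigenvalue bound (the Shewchuk--Du approach) must be replaced by the diagonal Cauchy--Schwarz estimate above; this is what converts element quantities into the patch-wise diagonal entries $A_{jj}$ and, at the same time, makes the argument invariant under diagonal scaling.
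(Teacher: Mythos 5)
Your proof is correct and takes essentially the same approach as the paper: the paper's key step is the inequality $\V{v}^T \D_K \V{w} \leq \frac{1}{2}\left(\V{v}^T \D_K \V{v} + \V{w}^T \D_K \V{w}\right)$ applied to the vectors $u_{i_K}\nabla\phi_{i_K}$, which is your bound $\Abs{(A_K)_{ij}} \leq \sqrt{(A_K)_{ii}(A_K)_{jj}}$ in disguise (the element stiffness matrix being the $\D_K$-Gram matrix of the scaled gradients), followed by the same vertex-wise reassembly yielding the factor $d+1$, the same lower bound via the basis vectors $\V{e_j}$, and the same observation that Jacobi scaling normalizes the diagonal of $S^{-1}AS^{-1}$ to $1$. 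The only cosmetic difference is that you package the estimate abstractly for positive semidefinite element blocks, whereas the paper works directly with the bilinear form and rederives the scaled bound with a general diagonal $S$.
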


\begin{proof} 
First, 
recall that for any symmetric positive semidefinite matrix $M$,
\[
   \V{v}^T M \V{w}
      \leq \frac{1}{2}\left( \V{v}^T M \V{v} + \V{w}^T M \V{w} \right), 
   \quad \forall  \V{v}, \V{w} \in \R^{d+1}.
\]
Then, using the local indices on $K$ and the definition of $A_{jj}$ from \eqref{eq:Aij} and rearranging the sum according to the vertices, we have
\begin{align*}
\V{u}^T A \V{u} 
   &= \int_\Omega \nabla u_h \cdot \D \nabla u_h \dx \\
   &= \sum_{K \in \cT_h} \Abs{K} \sum_{i_K, j_K = 1}^{d+1} 
      \left(u_{i_K} \nabla \phi_{j_K} \right) \cdot 
     \D_K \left(u_{j_K} \nabla \phi_{j_K} \right) \\
  &\leq (d+1) \sum_{K \in \cT_h} \Abs{K} \sum_{j_K= 1}^{d+1}
   \left(u_{j_K} \nabla\phi_{j_K}\right) \cdot \D_K \left(u_{j_K} \nabla\phi_{j_K} \right) \\
  &= (d+1) \sum_j u_j^2 \sum_{K \in \omega_j} \Abs{K} \nabla\phi_j \cdot \D_K \nabla\phi_j \\
  &= (d+1) \sum_j u_j^2 A_{jj} \\
  &\leq (d+1)  \norm{\V{u}}_2^2 \max_j A_{jj}.
\end{align*}
On the other hand, using the canonical basis vectors $\V{e_j}$ we have
\[
   \lambda_{\max}(A) \geq \V{e_j}^T A \V{e_j} = A_{jj}, \quad  j = 1,\dotsc,N_{vi},
\]
and altogether we get \eqref{eq:lambdaMaxA}.

Using the same procedure for a diagonal scaling $S = (s_j)$ we obtain
\[
   \max_j (s_j^{-2} A_{jj}) \leq \lambda_{\max}(S^{-1}AS^{-1}) \leq (d+1) \max_j (s_j^{-2} A_{jj}).
\]
For the Jacobi preconditioning we have $s_j^2 = A_{jj}$, which gives estimate \eqref{eq:lambdaMaxScaled}.
\end{proof}

\begin{remark}
Bound \eqref{eq:lambdaMaxScaled} can also be obtained by using the unassembled form of $A$ as shown in~\cite[Sect.~3]{Wathen87}.
However, the analysis employed in~\cite{Wathen87} cannot provide a lower bound on $\lambda_{\min}(S^{-1} A S^{-1})$ other than the trivial one, $\lambda_{\min}(S^{-1} A S^{-1}) \geq 0$.
\end{remark}

\subsection{Geometric interpretation}
\label{ssect:lambdaMaxGeom}
Although Lemma~\ref{lem:lambdaMax} gives a very tight bound on $\lambda_{\max}(A)$, it does not provide any explanation on how the mesh or the diffusion matrix affect the conditioning.
We now derive a bound on $\lambda_{\max}(A)$ in terms of mesh quantities and the diffusion matrix.

Let $F_K \colon \hat{K} \to K$ be the affine mapping from the reference element $\hat{K}$ to the mesh element $K$, $F_K'$ the Jacobian matrix of $F_K$, $j_K$ the local index of $\phi_j$ on $K$ and $\hat{\phi}_{j_K} = F_K \circ \phi_{j_K}$ the corresponding basis function on $\hat{K}$.

Using the chain rule, we have
\begin{align}
 A_{jj} 
   & =  \sum_{K \in \omega_j} \Abs{K} \nabla\phi_j \cdot \D_K \nabla\phi_j \notag \\
   &=  \sum_{K \in \omega_j} \Abs{K} 
      \bigl((F_K')^{-T} \nabla \hat{\phi}_{j_K}\bigr)  
         \cdot \D_K \bigl( (F_K')^{-T} \nabla \hat{\phi}_{j_K} \bigr)\notag \\
   &\leq  \sum_{K \in \omega_j} \Abs{K} \Norm{\FDF}_2 \norm{\nabla \hat{\phi}_{j_K}}_2^2 \notag \\
   & \leq C_{\hat{\phi}} \sum_{K\in\omega_j} \Abs{K}  \Norm{\FDF}_2 ,
\label{eq:scaling} 
\end{align}
where $C_{\hat \phi}  = \max_{i_K = 1,\dotsc, d+1}  \norm{\nabla \hat\phi_{i_K}}_2^2$.
Combining this with Lemma~\ref{lem:lambdaMax} yields
\begin{equation}
   \lambda_{\max}(A)  \leq (d+1) C_{\hat{\phi}} \max_j \sum_{K\in\omega_j} \Abs{K}  \Norm{\FDF}_2 .
   \label{eq:lambdaMaxA:1}
\end{equation}
\begin{remark}
If we denote the maximum number of elements meeting at a mesh point by $p_{\max}$, then bound \eqref{eq:lambdaMaxA:1} implies
\[
   \lambda_{\max}(A) 
         \leq p_{\max} (d+1) C_{\hat{\phi}} \max_K  \Bigl(\Abs{K} \Norm{\FDF}_2 \Bigr),
\]
which is comparable to the estimates mostly found in the literature (e.g.,~\cite{DuWaZh09,Fried73,Shewch02}).
Note that both this bound and \eqref{eq:lambdaMaxA:1} are less tight than the bound \eqref{eq:lambdaMaxA}.
\end{remark}

\begin{remark}
Bound \eqref{eq:lambdaMaxA:1} implies that the scaling
\[
   s_j^2 = \sum\limits_{K\in\omega_j}\Abs{K} \Norm{\FDF}_2, \quad j = 1,\dotsc, N_{vi}
\]
will also lead to bounds similar to \eqref{eq:lambdaMaxScaled} and those in Sect.~\ref{sect:lambdaMinSobolev}.
In general, this scaling is greater than the Jacobi preconditioning (cf. \eqref{eq:scaling}) 
although they are equal in 1D or for a mesh that is uniform in the metric specified by $\D^{-1}$ (cf. Sect.~\ref{ssect:geomInt:GeneralCase}).

\end{remark}

\subsubsection{Special case  $\D = I$}
For the simplest case of $\D = I$, bound \eqref{eq:lambdaMaxA:1} has a rather simple interpretation.
The quantity $\norm{(F_K')^{-1}}_2$ can be bounded by the reciprocal of the in-diameter $h_{\min,K}$ of $K$~\cite[Lemma~5.1.2]{Huang11}. 
If we denote the average aspect of $K$ by $\bar{h}_{K}$ (i.e., $\bar{h}_{K} = \Abs{K}^{\frac{1}{d}}$), then we can rewrite \eqref{eq:lambdaMaxA:1} as
\[
  \lambda_{\max}(A) \leq C_{\hat \phi}\max_j \sum_{K \in \omega_j} 
   \Abs{K} \left(\frac{1}{h_{\min,K}}\right)^2 =
   C_{\hat \phi}\max_j \sum_{K \in \omega_j} 
   \Abs{K}^{\frac{d-2}{d}} \left(\frac{\bar{h}_{K}}{h_{\min,K}}\right)^2.
\]
The ratio $\frac{\bar{h}_{K}}{h_{\min,K}}$ is a measure of the \emph{aspect ratio} of $K$.
Thus, for the case of $\D = I$, the largest eigenvalue of $A$ is bounded by the maximum volume-weighted element aspect ratio of the mesh.
This is consistent with the observation by Shewchuk in~\cite{Shewch02} where a detailed discussion on the relation between the largest eigenvalue of the stiffness matrix and the element aspect ratio is available for the case of $\D=I$ in $d=2$ and $d=3$ dimensions.

For the general case $\D \neq I$, on the other hand, it is more convenient to interpret bound \eqref{eq:lambdaMaxA:1} in terms of the \emph{mesh quality measures} introduced in~\cite{Huang05}.
We now proceed with this.

\subsubsection{Mesh quality measures}
\label{ssect:qualityMeasures}
The first measure is the \emph{alignment quality measure}, which can be simply viewed as an equivalent to the aspect ratio of $K$ in the metric specified by $\D_K^{-1}$.
It is defined as
\[
   Q_{ali,\D^{-1}}(K) = {\left(\frac{\frac{1}{d} \tr\bigl( \FDF \bigr)}
   {\det{\bigl( \FDF \bigr)}^{\frac 1 d}} \right)}^{\frac{d}{2(d-1)}}
\]
and measures how closely the principal directions of the circumscribed ellipsoid of $K$ are aligned with the eigenvectors of $\D_K$ and the semi-lengths of the principal axes are proportional to the eigenvalues~\cite{Huang11}.
Notice that 
\[ 
   1 \le Q_{ali,\D^{-1}}(K) < \infty.
\] 
In particular, $Q_{ali,\D^{-1}}(K) = 1$ implies that $K$ is equilateral in the metric $\D_K^{-1}$.

The second measure is the \emph{equidistribution quality measure} defined as the ratio of the average element volume to the volume of $K$, both measured in the metric specified by $\D_K^{-1}$,
\begin{equation}
   Q_{eq,\D^{-1}}(K) = \frac{\frac{1}{N} \sigma_h}{\Abs{K}_{\D_K^{-1}}},
   \label{Q-eq}
\end{equation}
where $\Abs{K}_{\D_K^{-1}} = \Abs{K} \text{det}(\D_K)^{-\frac 1 2}$ is the volume of $K$ with respect to $\D_K^{-1}$ and
\begin{equation}
   \sigma_h = \sum_{K \in \cT_h} \Abs{K}_{\D_K^{-1}} .
   \label{sigma-1}
\end{equation}
The equidistribution quality measure satisfies
\[ 
   0 < Q_{eq,\D^{-1}}(K) < \infty 
   \quad \text{and} \quad
   \frac{1}{N} \sum_{K \in \cT_h} Q_{eq,\D^{-1}}^{-1}(K) = 1.
\] 
Notice that
\[
   \sigma_h = \sum_{K \in \cT_h} \Abs{K} \det(\D_K)^{-\frac 1 2}
      \to \int_\Omega \det\bigl(\D(\V{x})\bigr)^{-\frac 1 2} \dx = \Abs{\Omega}_{\D^{-1}}
\]
as the mesh is being refined.
As a consequence, $\sigma_h$ can be considered as a constant.

\subsubsection{Geometric interpretation (general case)}
\label{ssect:geomInt:GeneralCase}
Using the quality measures we can rewrite the key factor $\norm{\FDF}_2$  as
\begin{align*}
  \Norm{\FDF}_2 
   & \le  \tr \left( \FDF \right) \notag \\
   & = d Q_{ali,\D^{-1}}^{\frac{2(d-1)}{d}}(K) 
      \Bigl( \Abs{K} \det (\D_K)^{-\frac{1}{2}} \Bigr)^{- \frac 2 d } \notag \\
   & = d {\left( \frac{N}{\sigma_h} \right)}^{\frac 2 d}
      \left [ \frac{}{} Q_{ali,\D^{-1}}^{d-1}(K) Q_{eq,\D^{-1}}(K) \right ]^{\frac 2 d}
\end{align*}
and therefore
\begin{equation}
   \lambda_{\max}(A) 
      \leq C {\left( \frac{N}{\sigma_h} \right)}^{\frac{2}{d}}  
         \max_j \sum_{K\in\omega_j} 
         \Abs{K} \left[ \frac{}{}Q_{ali,\D^{-1}}^{d-1}(K) 
            Q_{eq,\D^{-1}}(K) \right]^{\frac 2 d}.
   \label{eq:lambdaMax:QualityEstimate}
\end{equation}
Thus, $\lambda_{\max}(A)$ is bounded by the maximum volume-weighted, combined alignment and equidistribution measure of the mesh in the metric $\D_K^{-1}$.

When a mesh is adapted to the coefficients of the BVP, i.e., it is uniform in the metric $\D^{-1}$, it will have the properties
\begin{equation}
   Q_{ali,\D^{-1}}(K) = 1, 
      \quad Q_{eq,\D^{-1}}(K) = 1,  \quad \forall  K \in \cT_h
   \label{eq:M:u:mesh}
\end{equation}
and
\begin{equation}
   {\left( \frac{N}{\sigma_h} \right)}^{\frac 2 d}  \le 
   \Norm{\FDF}_2 \le d {\left( \frac{N}{\sigma_h} \right)}^{\frac 2 d} .
   \label{D-factor-2}
\end{equation}
Moreover,  bound \eqref{eq:lambdaMax:QualityEstimate} will reduce to
\[
   \lambda_{\max}(A) \leq C N^{\frac{2}{d}} \Abs{\omega_{\max}}.
\]

\section{Smallest eigenvalue and condition number of the stiffness matrix}
\label{sect:lambdaMinSobolev}

The approach employed in this section was originally developed by Bank and Scott~\cite{BanSco89} for isotropic meshes.
We generalize it here to arbitrary anisotropic meshes. 

Hereafter, we will use $C$ as a generic constant which can have different values at different appearances but is independent of the mesh, the number of mesh elements, and the solution of the BVP.

We start with bounds on $\lambda_{\min}(A)$.

\begin{lemma}[Smallest eigenvalue]
\label{lem:lambdaMin1d}

The smallest eigenvalue of the stiffness matrix for the linear finite element approximation of BVP \eqref{eq:bvp1} is bounded from below by
\begin{align}
\lambda_{\min}(A) 
   \geq C d_{\min}  N^{-1}  
   \begin{cases}
      1, & \text{for $d = 1$}, \\
      \left(1 +  \ln \frac{\abs{\bar{K}}}{\Abs{K_{\min}}} \right)^{-1}, & \text{for $d = 2$}, \\
      \left(\frac{1}{N}\sum\limits_{K\in\cT_h} 
         {\left(\frac{\abs{\bar{K}}}{\Abs{K}}\right)}^{\frac{d-2}{2}} \right)^{-\frac{2}{d}},
         & \text{for $d\geq3$},
   \end{cases}
   \label{eq:lambdaMin}
\end{align}
where $\abs{\bar{K}} = \frac{1}{N} \Abs{\Omega}$ denotes the average element size.

The smallest eigenvalue of the diagonally (Jacobi) preconditioned stiffness matrix is bounded from below by
\begin{equation}
   \lambda_{\min}(S^{-1}AS^{-1}) 
   \geq C N^{-2} 
      \left( \frac{1}{N d_{\min}} \sum_{K\in\cT_h} \D_K \frac{\abs{\bar{K}}}{\Abs{K}} \right)^{-1},
      \quad \text{for $d = 1$}
   \label{eq:lambdaMinScaled1}
\end{equation}
and
\begin{multline}
 \lambda_{\min}(S^{-1}AS^{-1}) 
   \geq C N^{-\frac{2}{d}} 
      \left(\frac{1}{N d_{\min}^{\frac d 2}} \sum\limits_{K\in\cT_h} \Abs{K} 
       \Norm{\FDF}_2^{\frac{d}{2}} \right)^{-\frac{2}{d}} 
      \\
   \times 
   \begin{cases}
      \left(1 + \biggl|\ln\frac{\max\limits_{K\in\cT_h} \Norm{\FDF}_2}
         {\sum\limits_{K\in\cT_h} \Abs{K} 
             \Norm{\FDF}_2}\biggl|\right)^{-1} ,
         &  \text{for $d = 2$},  \\
      1, &  \text{for $d\geq3$}.
   \end{cases}
   \label{eq:lambdaMinScaled}
\end{multline}

\end{lemma}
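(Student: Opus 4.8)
The plan is to work directly with the Rayleigh quotient. Writing $u_h=\sum_j u_j\phi_j\in V^h\subset H_0^1(\Omega)$, the coercivity assumption \eqref{eq:D:1} gives the numerator bound
\[
   \V{u}^T A \V{u}=\int_\Omega\nabla u_h\cdot\D\nabla u_h\dx\ge d_{\min}\,\abs{u_h}_{H^1}^2 ,
\]
so that $\lambda_{\min}(A)=\min_{\V{u}\neq0}\V{u}^TA\V{u}/\norm{\V{u}}_2^2\ge d_{\min}\min_{u_h}\abs{u_h}_{H^1}^2/\sum_j u_j^2$, and, after the substitution $\V{u}=S^{-1}\V{v}$ with $s_j^2=A_{jj}$, likewise $\lambda_{\min}(S^{-1}AS^{-1})\ge d_{\min}\min_{u_h}\abs{u_h}_{H^1}^2/\sum_j A_{jj}u_j^2$. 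Everything therefore reduces to an \emph{upper} bound on the denominator $\sum_j u_j^2$ (respectively $\sum_j A_{jj}u_j^2$) in terms of $\abs{u_h}_{H^1}^2$. The naive estimate $\sum_j u_j^2\Abs{\omega_j}\approx\norm{u_h}_{L^2}^2$ coming from \eqref{eq:B:diag:1} and Theorem~\ref{thm:conditionMM} is too crude, since it only produces $\Abs{\omega_{\min}}$; the essence of the Bank--Scott technique is to replace this by the \emph{average} element size through a Sobolev embedding.

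The heart of the argument is a denominator estimate carried out element by element. Bounding $\sum_j u_j^2\le\sum_{K\in\cT_h}\sum_{i_K}u_{i_K}^2$ (each interior node is counted at least once), using on each $K$ the norm equivalence $\int_K\abs{u_h}^p\dx\approx\Abs{K}\max_{i_K}\abs{u_{i_K}}^p$ on the space of linear functions, and then applying Hölder's inequality over $\cT_h$ with exponents $\tfrac{d}{2}$ and $\tfrac{d}{d-2}$ (splitting off the volume weights $\Abs{K}^{\pm(d-2)/d}$) gives, for $d\ge3$,
\[
   \sum_j u_j^2\le C\Bigl(\sum_{K\in\cT_h}\Abs{K}^{-\frac{d-2}{2}}\Bigr)^{\frac{2}{d}}\abs{u_h}_{H^1}^2 ,
\]
where the intermediate step is the embedding $H_0^1(\Omega)\hookrightarrow L^{2d/(d-2)}(\Omega)$ together with the Poincaré inequality. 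Factoring out $\abs{\bar K}=\Abs{\Omega}/N$ turns the geometric factor into $N\bigl(\tfrac1N\sum_{K}(\abs{\bar K}/\Abs{K})^{(d-2)/2}\bigr)^{2/d}$, which is precisely the $d\ge3$ case of \eqref{eq:lambdaMin}. The scaled case follows the same skeleton: starting from
\[
   \sum_j A_{jj}u_j^2=\sum_{K\in\cT_h}\Abs{K}\sum_{i_K}u_{i_K}^2\,\nabla\phi_{i_K}\cdot\D_K\nabla\phi_{i_K}\le C_{\hat\phi}\sum_{K\in\cT_h}\Abs{K}\,\Norm{\FDF}_2\sum_{i_K}u_{i_K}^2
\]
by \eqref{eq:scaling}, the identical element-wise Hölder splitting, now distributing the weights $\Abs{K}\Norm{\FDF}_2$ so that the first Hölder factor becomes $\bigl(\sum_{K}\Abs{K}\Norm{\FDF}_2^{d/2}\bigr)^{2/d}$, yields \eqref{eq:lambdaMinScaled} for $d\ge3$. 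Performing the grouping element-wise rather than patch-wise is exactly what keeps $C$ \emph{mesh-independent}: it uses only the fixed number $d+1$ of vertices per simplex and never the (mesh-dependent) number of elements in a patch.

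The remaining dimensions reflect the borderline and supercritical behavior of the embedding. For $d=1$ the embedding $H_0^1\hookrightarrow L^\infty$ gives at once $\sum_j u_j^2\le N\max_j u_j^2=N\norm{u_h}_{L^\infty}^2\le CN\abs{u_h}_{H^1}^2$, hence \eqref{eq:lambdaMin} and, with the weights $A_{jj}$ treated as above, \eqref{eq:lambdaMinScaled1}. The case $d=2$ is where the logarithm appears and is the main obstacle. Here there is no uniform $L^p$ control, so I would keep the exponent $p$ free and use the sharp two-dimensional Sobolev inequality $\norm{u_h}_{L^p}\le C\sqrt{p}\,\abs{u_h}_{H^1}$; repeating the Hölder step gives
\[
   \sum_j u_j^2\le C\,p\Bigl(\sum_{K\in\cT_h}\Abs{K}^{-\frac{2}{p-2}}\Bigr)^{\frac{p-2}{p}}\abs{u_h}_{H^1}^2\le C\,p\,\bigl(N\,\Abs{K_{\min}}^{-\frac{2}{p-2}}\bigr)^{\frac{p-2}{p}}\abs{u_h}_{H^1}^2 ,
\]
after which one \emph{optimizes over $p$}: choosing $2/(p-2)\sim 1/\ln(\abs{\bar K}/\Abs{K_{\min}})$ balances the algebraic factor $p$ against the factor $(\abs{\bar K}/\Abs{K_{\min}})^{2/(p-2)}$ that blows up as $p\to2$, collapsing the bound to $CN\bigl(1+\ln(\abs{\bar K}/\Abs{K_{\min}})\bigr)\abs{u_h}_{H^1}^2$ and hence to the $d=2$ case of \eqref{eq:lambdaMin}. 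The scaled $d=2$ estimate is obtained identically, except that the frozen weights leave a residual factor $\bigl(\max_K\Norm{\FDF}_2/\sum_K\Abs{K}\Norm{\FDF}_2\bigr)^{2/(p-2)}$, whose optimal $p$ produces the logarithm of exactly that ratio in \eqref{eq:lambdaMinScaled}.

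I expect the delicate points to be, first, confirming that the mesh-independent constants from the $L^p$--nodal-value equivalence and the $\sqrt{p}$ in the Sobolev constant are harmless once raised to the power $2/p$ (so that they do not interfere with the optimization), and second, carrying out the $p$-optimization so that the constant $1$ appears \emph{additively} with the logarithm, which is what makes the bounds correct and sharp for near-uniform meshes where the relevant ratio is $\cO(1)$.
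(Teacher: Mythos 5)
Your proposal is correct and follows essentially the same route as the paper's proof: reduce via the Rayleigh quotient and \eqref{eq:D:1} to an upper bound on $\sum_j u_j^2$ (resp.\ $\sum_j A_{jj}u_j^2$) in terms of $\abs{u_h}_{H^1}^2$, then combine Poincar\'e's inequality, the dimension-dependent Sobolev embedding, a weighted H\"older split over $\cT_h$, and the nodal--$L^p$ norm equivalence on each element, with optimization of the exponent in $d=2$ producing the additive logarithm; your weight distributions in the H\"older step coincide exactly with the paper's choices of $\alpha_K$. The only cosmetic difference is that you bound $\sum_j u_j^2$ element-wise ($\sum_j u_j^2 \le \sum_K \sum_{i_K} u_{i_K}^2$) where the paper organizes the same sum patch-wise and exploits that each patch contains at least one element --- these are dual phrasings of the same counting argument.
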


\begin{proof}
Since Sobolev's inequality is different for $d=1$, $d=2$ and $d\geq 3$ dimensions~\cite[Theorem~7.10]{Gilbarg01}, we treat these cases separately.

\vspace{0.5em}
Case $d=1$:
Let $C_S$ be the constant associated with Sobolev's inequality.
Using the inequality \eqref{eq:D:1}, Sobolev's inequality, and the equivalence of the vector norms,
\begin{align*} \V{u}^T A \V{u} 
      &\geq d_{\min} \Abs{u_h}^2_{H^1(\Omega)} \\
      &\geq d_{\min} C_S \Abs{\Omega}^{-1} \sup_\Omega \Abs{u_h}^2 \\
      &= d_{\min} C_S \Abs{\Omega}^{-1}  \max_j u_j^2 \\
      &\geq d_{\min} C_S \Abs{\Omega}^{-1}  N^{-1} \Norm{\V{u}}_2^2
\end{align*}
Therefore, $\lambda_{\min}(A) \geq C d_{\min} N^{-1}$.

With scaling,
\begin{equation}
   \V{u}^T S^{-1} A S^{-1}\V{u} 
   \geq C d_{\min} \max_j s_j^{-2} u_j^2 
   \geq C d_{\min}  \frac{\sum_j s_j^2 s_j^{-2} u_j^2}{\sum_j s_j^2} 
   = C d_{\min} \frac{ \Norm{\V{u}}_2^2}{\sum_j s_j^2}.
   \label{eq:lMinProof:1d}
\end{equation}
In 1D, $\nabla \phi_{j_K} \vert_K = \Abs{K}^{-1}$ and therefore
\[
   s_j^2 =A_{jj} 
         = \sum_{K \in \omega_j} \Abs{K} \nabla\phi_j \cdot \D_K \nabla\phi_j 
        = \sum_{K \in \omega_j} \frac{\D_K}{\Abs{K}}.
\]
Using this in  \eqref{eq:lMinProof:1d} gives \eqref{eq:lambdaMinScaled1}.

\vspace{0.5em}
Case $d=2$:
Consider a set of not-all-zero non-negative numbers $\{\alpha_K, K\in \cT_h\}$ (to be determined later) and a finite number $q > 2$.
Let $C_P$, $C_S$, and $C_{\hat{K}}$ be the constants associated with Poincar\'e's inequality, Sobolev's inequality,  and the norm equivalence on $\hat{K}$, respectively.
Using \eqref{eq:D:1}, Poincar\'e's, Sobolev's and Hölder's inequalities and the norm equivalence for $\hat{u}_h$, we have
\begin{align*}
  \V{u}^T A \V{u} & = \int_\Omega \nabla u_h \cdot \D \nabla u_h \dx 
       \geq d_{\min} \Abs{\nabla u_h}_{H^1(\Omega)} \\
      &\geq \frac{d_{\min} C_P}{1+C_P} \Norm{u_h}^2_{H^1(\Omega)}\\
      &\geq \frac{d_{\min} C_P C_S}{1+C_P} \frac{1}{q} \Norm{u_h}^2_{L^q(\Omega)}\\
      & = \frac{d_{\min} C_P C_S}{1+C_P} \frac{1}{q} 
         {\left( \sum_{K \in \cT_h} \Norm{u_h}^q_{L^q(K)} \right)}^{\frac{2}{q}} \\
      & = \frac{d_{\min} C_P C_S}{1+C_P} \frac{1}{q} \left( 
         \sum\limits_{K\in\cT_h} \alpha_K^{\frac{q}{q-2}} \right)^{-\frac{q-2}{q}} 
      {\left( \sum\limits_{K\in\cT_h} \alpha_K^{\frac{q}{q-2}} \right)}^{\frac{q-2}{q}}
      {\left( \sum\limits_{K \in \cT_h} 
         \Norm{u_h}^q_{L^q(K)} \right)}^{\frac{2}{q}} \\
      &\geq \frac{d_{\min} C_P C_S}{1+C_P} \frac{1}{q} \left( 
         \sum\limits_{K\in\cT_h} \alpha_K^{\frac{q}{q-2}} \right)^{-\frac{q-2}{q}}
         \sum_{K\in\cT_h} \alpha_K  \Norm{u_h}^2_{L^q(K)} \\
      &= \frac{d_{\min} C_P C_S}{1+C_P} \frac{1}{q} \left( 
         \sum\limits_{K\in\cT_h} \alpha_K^{\frac{q}{q-2}} \right)^{-\frac{q-2}{q}}
         \sum_{K\in\cT_h} \alpha_K \Abs{K}^{\frac{2}{q}}
            \Norm{\hat{u}_h}^2_{L^q(\hat{K})} \\
      &\geq \frac{d_{\min} C_P C_S C_{\hat{K}}}{1+C_P} \frac{1}{q} \left( 
         \sum\limits_{K\in\cT_h} \alpha_K^{\frac{q}{q-2}} \right)^{-\frac{q-2}{q}}
         \sum_{K\in\cT_h} \alpha_K \Abs{K}^{\frac{2}{q}} \Norm{\V{u}_K}_2^2  \\
      &= \frac{d_{\min} C_P C_S C_{\hat{K}}}{1+C_P} \frac{1}{q} \left( 
         \sum\limits_{K\in\cT_h} \alpha_K^{\frac{q}{q-2}} \right)^{-\frac{q-2}{q}}
         \sum_j u_j^2 \sum_{K\in\omega_j} \alpha_K \Abs{K}^{\frac{2}{q}}.
\end{align*}
The choice $\alpha_K = \Abs{K}^{-\frac{2}{q}}$ yields
\[
   \V{u}^T A \V{u} 
   \geq C d_{\min} \frac{1}{q} 
      \left(\sum\limits_{K\in\cT_h} \Abs{K}^{-\frac{2}{q-2}} \right)^{-\frac{q-2}{q}}
      \sum_j u_j^2
\]
and therefore
\begin{align}
   \lambda_{\min}(A) 
      &\geq C d_{\min} q^{-1}
         \left( \sum_{K\in\cT_h} \Abs{K}^{-\frac{2}{q-2}} \right)^{-\frac{q-2}{q}} \notag \\
      &\geq C d_{\min} q^{-1}
         \left( N \Abs{K_{\min}}^{-\frac{2}{q-2}} \right)^{-\frac{q-2}{q}} \notag \\
      &= C d_{\min} N^{-1} \left[ q^{-1} {\left(  N \Abs{K_{\min}} \right)}^{\frac{2}{q}} \right].
      \label{eq:lambdaMin2:pelimBound}
\end{align}
The largest lower bound on \eqref{eq:lambdaMin2:pelimBound} is obtained for 
$q = \max\left\lbrace 2, \bigl\lvert \ln \left(N\Abs{K_{\min}}\right) \bigr\rvert \right\rbrace$ with
\[
   q^{-1} {\left(  N \Abs{K_{\min}} \right)}^{\frac{2}{q}} 
      \geq \frac{C}{ 1 + \bigl\lvert \ln(N \Abs{K_{\min}})\bigr\rvert}.
\]
The choice $q=2$ is viewed as the limiting case as $q \rightarrow 2^+$.
Estimate \eqref{eq:lambdaMin} follows from this, \eqref{eq:lambdaMin2:pelimBound} and the definition of the average element size.

With scaling, we have
\[
\V{u}^T S^{-1} A S^{-1} \V{u} \geq C d_{\min} \frac{1}{q} \left( 
         \sum\limits_{K\in\cT_h} \alpha_K^{\frac{q}{q-2}} \right)^{-\frac{q-2}{q}}
         \sum_j u_j^2 s_j^{-2} \sum_{K\in\omega_j} \alpha_K \Abs{K}^{\frac{2}{q}} .
\]
For the Jacobi preconditioning $s_j^2 = A_{jj} = \sum_{K\in\omega_j} \nabla \phi_j \cdot \D_K  \nabla \phi_j$ we choose
\[
   \alpha_K 
   = \Abs{K}^{\frac{q-2}{q}}  \sum_{i_K=1}^{d+1} \nabla \phi_{i_K} \cdot \D_K  \nabla \phi_{i_K}
   = \Abs{K}^{\frac{q-2}{q}}  \sum_{i_K=1}^{d+1}
      \nabla \hat{\phi}_{i_K} \cdot \FDF  \nabla \hat{\phi}_{i_K},
\]
which gives
\[
s_j^{-2} \sum_{K\in\omega_j} \alpha_K \Abs{K}^{\frac{2}{q}}  \geq 1
\]
and
\[
\alpha_K \le  (d+1) C_{\hat{\phi}} \Abs{K}^{\frac{q-2}{q}} \Norm{\FDF}_2 ,
\]
where $C_{\hat{\phi}} = \max_{i_K = 1,\dotsc, d+1} \norm{\nabla \hat \phi_{i_K}}^2$.
With these and choosing the value for the index $q$ in a similar manner as for the case without scaling we obtain \eqref{eq:lambdaMinScaled}.

\vspace{0.5em}
Case $d\geq3$:
This case is very similar to case $d = 2$.
Again, from \eqref{eq:D:1}, Poincar\'e's, Sobolev's and Hölder's inequalities and the norm equivalence for $\hat{u}_h$, we have
\begin{align*}
   \V{u}^T A \V{u} & = \int_\Omega \nabla u_h \cdot \D \nabla u_h \dx 
      \geq d_{\min} \abs{\nabla u_h}_{H^1(\Omega)} \\
      &  \geq \frac{d_{\min} C_P}{1+C_P} \Norm{u_h}^2_{H^1(\Omega)}   \\
      &\geq \frac{d_{\min} C_P C_S}{1+C_P} \Norm{u_h}^2_{L^{\frac{2d}{d-2}}(\Omega)} 
         \\
      & = \frac{d_{\min} C_P C_S}{1+C_P}
       \left( \sum_{K \in \cT_h} \Norm{u_h}^{\frac{2d}{d-2}}_{L^{\frac{2d}{d-2}}(K)} 
         \right)^{\frac{d-2}{d}} \\
      & =\frac{d_{\min} C_P C_S}{1+C_P} \left( 
         \sum\limits_{K\in\cT_h} \alpha_K^{\frac{d}{2}} \right)^{-\frac{2}{d}} 
      \left( \sum\limits_{K\in\cT_h} \alpha_K^{\frac{d}{2}} \right)^{\frac{2}{d}}
      \left( \sum\limits_{K \in \cT_h} 
         \Norm{u_h}^{\frac{2d}{d-2}}_{L^{\frac{2d}{d-2}}(K)} \right)^{\frac{d-2}{d}}\\
      &\geq \frac{d_{\min} C_P C_S}{1+C_P}  \left( 
         \sum\limits_{K\in\cT_h} \alpha_K^{\frac{d}{2}} \right)^{-\frac{2}{d}}
         \sum_{K\in\cT_h} \alpha_K  \Norm{u_h}^2_{L^{\frac{2d}{d-2}}(K)}
         \\
   &= \frac{d_{\min} C_P C_S}{1+C_P} \left( 
         \sum_{K\in\cT_h} \alpha_K^{\frac{d}{2}} \right)^{-\frac{2}{d}}
         \sum_{K\in\cT_h} \alpha_K \Abs{K}^{\frac{d-2}{d}}
            \Norm{\hat{u}_h}^2_{L^{\frac{2d}{d-2}}(\hat{K})}\\
      &\geq \frac{d_{\min} C_P C_S C_{\hat{K}}}{1+C_P}  \left( 
         \sum\limits_{K\in\cT_h} \alpha_K^{\frac{d}{2}} \right)^{-\frac{2}{d}}
         \sum_{K\in\cT_h} \alpha_K \Abs{K}^{\frac{d-2}{d}} \Norm{\V{u}_K}_2^2
        \\
      &=  \frac{d_{\min} C_P C_S C_{\hat{K}}}{1+C_P} \left( 
         \sum\limits_{K\in\cT_h} \alpha^{\frac{d}{2}}_K \right)^{-\frac{2}{d}}
         \sum_j u_j^2 \sum_{K\in\omega_j} \alpha_K \Abs{K}^{\frac{d-2}{d}} .
\end{align*}
The choice $\alpha_K = \Abs{K}^{-\frac{d-2}{d}}$ gives
\[
   \V{u}^T A \V{u} 
   \geq C d_{\min} \left( \sum\limits _{K\in\cT_h}\Abs{K}^{\frac{2-d}{2}}  \right)^{-\frac{2}{d}}
      \sum_j u_j^2 .
\]
Estimate \eqref{eq:lambdaMin} follows from this and the definition of the average element size.

The bound for the scaled stiffness matrix is obtained by choosing 
\[
   \alpha_K = \Abs{K}^{\frac{2}{d}} \sum_{i_K=1}^{d+1} 
      \nabla \hat{\phi}_{i_K} \cdot
         \FDF  \nabla \hat{\phi}_{i_K}.
   \qedhere
\]
\end{proof}


Combining Lemma~\ref{lem:lambdaMax}, estimate \eqref{eq:lambdaMaxA:1} and Lemma~\ref{lem:lambdaMin1d} we obtain upper bounds on the condition number of the stiffness matrix and the scaled stiffness matrix.

\begin{theorem}[Condition number of the stiffness matrix]
\label{thm:conditionNew}
The condition number of the stiffness matrix for the linear finite element approximation of BVP \eqref{eq:bvp1} is bounded by
\begin{equation}
 \kappa(A) \le C N^2 \frac{1}{d_{\min}}
    \max_j \sum\limits_{K\in\omega_j} \D_K \frac{\abs{\bar{K}}}{\Abs{K}},
      \quad \text{for $d = 1$}
\label{eq:conditionNew:1D}
\end{equation}
and
\begin{multline}
   \kappa(A) \leq C N^{\frac{2}{d}} \Biggl(\frac{N^{1-\frac{2}{d}}}{d_{\min}} 
       \max_j \sum\limits_{K\in\omega_j}
       \Abs{K} \Norm{\FDF}_2 \Biggl)  \\ 
   \times 
   \begin{cases}
      1 +  \ln \frac{\abs{\bar{K}}}{\Abs{K_{\min}}}, & \text{for $d = 2$}, \\
      \left(\frac{1}{N}\sum\limits_{K\in\cT_h} 
         \left (\frac{\abs{\bar{K}}}{\Abs{K}}\right )^{\frac{d-2}{2}} \right)^{\frac{2}{d}},
         & \text{for $d\geq3$}.
   \end{cases}
   \label{eq:conditionNew}
\end{multline}

The condition number of the diagonally (Jacobi) preconditioned stiffness matrix is bounded by
\begin{equation}
 \kappa(S^{-1}AS^{-1}) 
   \le C N^2  \frac{1}{N d_{\min}}
    \sum_{K\in\cT_h} \D_K \frac{\abs{\bar{K}}}{\Abs{K}},
      \quad \text{for $d = 1$}
\label{eq:conditionNew:1:1}
\end{equation}
and
\begin{multline}
 \kappa(S^{-1}AS^{-1}) 
   \leq C N^{\frac{2}{d}} 
      \left(\frac{1}{N d_{\min}^{\frac d 2}} \sum\limits_{K\in\cT_h}
      \Abs{K} \Norm{\FDF}_2^{\frac{d}{2}} \right)^{\frac{2}{d}} 
      \\
   \times 
   \begin{cases}
      1 + \Abs{\ln\frac{\max\limits_{K\in\cT_h} \Norm{\FDF}_2}
         {\sum\limits_{K\in\cT_h}
         \Abs{K} \Norm{\FDF}_2}} ,
         & \text{for $d = 2$},  \\
      1, & \text{for $d\geq3$}.
   \end{cases}
   \label{eq:conditionNew:1}
\end{multline}
\end{theorem}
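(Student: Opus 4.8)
The plan is to exploit the elementary identity $\kappa(A) = \lambda_{\max}(A)/\lambda_{\min}(A)$ and simply assemble the bounds already in hand. All the genuine analysis—the Sobolev, Poincaré and Hölder inequalities, the norm equivalence on $\hat{K}$, and the patch-wise rearrangement of the energy—is contained in Lemma~\ref{lem:lambdaMax}, estimate~\eqref{eq:lambdaMaxA:1}, and Lemma~\ref{lem:lambdaMin1d}. Consequently the proof reduces to dividing the upper bound on $\lambda_{\max}$ by the lower bounds on $\lambda_{\min}$ and reconciling the powers of $N$ and the constants in each dimensional case.

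First I would handle the unscaled matrix. Dividing $\lambda_{\max}(A) \le (d+1) C_{\hat{\phi}} \max_j \sum_{K\in\omega_j}\Abs{K}\Norm{\FDF}_2$ from~\eqref{eq:lambdaMaxA:1} by each of the three lower bounds in~\eqref{eq:lambdaMin} produces the three cases of the conclusion. For $d=1$, after dividing by $\lambda_{\min}(A)\ge C d_{\min} N^{-1}$, I would use the one-dimensional simplification $\Abs{K}\Norm{\FDF}_2 = \D_K/\Abs{K}$ and then rewrite the remaining factor $N$ as $N = N^2\,\abs{\bar{K}}/\Abs{\Omega}$ via $\abs{\bar{K}} = \Abs{\Omega}/N$, folding $\Abs{\Omega}$ and $C_{\hat{\phi}}$ into the generic constant $C$, which lands exactly on~\eqref{eq:conditionNew:1D}. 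For $d=2$ and $d\ge3$ I would divide by the corresponding case of~\eqref{eq:lambdaMin} and split the leftover power of $N$ as $N = N^{\frac{2}{d}}\cdot N^{1-\frac{2}{d}}$, distributing the first factor into the prefactor and the second into the parenthesized patch term; using $\abs{\bar{K}} = \Abs{\Omega}/N$ once more puts the equidistribution-type sum into the stated form, yielding~\eqref{eq:conditionNew}.

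For the scaled matrix the argument is the same but cleaner, since the numerator is now mesh-independent. I would combine $\lambda_{\max}(S^{-1}AS^{-1}) \le d+1$ from~\eqref{eq:lambdaMaxScaled} with the lower bounds~\eqref{eq:lambdaMinScaled1} and~\eqref{eq:lambdaMinScaled}, so that $\kappa(S^{-1}AS^{-1}) \le (d+1)/\lambda_{\min}(S^{-1}AS^{-1})$ reproduces~\eqref{eq:conditionNew:1:1} for $d=1$ and~\eqref{eq:conditionNew:1} for $d\ge2$ directly, after absorbing the factor $d+1$ into $C$. The logarithmic factor in the $d=2$ case is inherited verbatim from~\eqref{eq:lambdaMinScaled}.

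There is no deep obstacle here; this statement is essentially a corollary of the preceding results. The only care required is the bookkeeping of the powers of $N$—in particular the split $N = N^{\frac{2}{d}}\cdot N^{1-\frac{2}{d}}$ that matches the prefactor and the parenthesized factor in~\eqref{eq:conditionNew}—and keeping track of which quantities are genuine mesh-independent constants ($\Abs{\Omega}$, $C_{\hat{\phi}}$, and $d+1$) so that they can be safely collected into the single constant $C$.
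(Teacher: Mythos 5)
Your proposal is correct and is essentially identical to the paper's own argument: the paper gives no separate proof of Theorem~\ref{thm:conditionNew} beyond the one-line remark that it follows by combining Lemma~\ref{lem:lambdaMax}, estimate~\eqref{eq:lambdaMaxA:1}, and Lemma~\ref{lem:lambdaMin1d}, which is exactly your division of the $\lambda_{\max}$ upper bounds by the $\lambda_{\min}$ lower bounds. Your bookkeeping details (the 1D simplification $\Abs{K}\Norm{\FDF}_2 = \D_K/\Abs{K}$, the substitution $\abs{\bar{K}} = \Abs{\Omega}/N$, and the split $N = N^{\frac{2}{d}}\cdot N^{1-\frac{2}{d}}$) are all sound.
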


\subsection{Geometric interpretation}
\label{sect:geoInt}

We now study the geometric interpretation of the bounds for the condition number.

\subsubsection{Without scaling}
Bounds \eqref{eq:conditionNew:1D} and \eqref{eq:conditionNew} contain three factors, a base bound $C N^{\frac 2 d}$, a factor reflecting the effects of the mesh nonuniformity measured in the metric $\D^{-1}$ (\emph{mesh $\D$-nonuniformity}), and, if $d\geq2$, a factor reflecting the effects of the mesh nonuniformity in volume measured in the Euclidean metric (\emph{volume-nonuniformity}).

The first factor $N^{\frac{2}{d}}$ corresponds to the condition number of the stiffness matrix for the 
Laplacian operator on a uniform mesh (cf. Special Case~\ref{ex:uniform} below).

The second factor 
\[
\frac{N^{1-\frac{2}{d}}}{d_{\min}} \max_j \sum_{K\in\omega_j}
         \Abs{K} \Norm{\FDF}_2
\]
reflects the effects of the mesh $\D$-nonuniformity and can be understood as a volume-weighted, combined alignment and equidistribution quality measure of the mesh with respect to $\D^{-1}$ (cf. Sect.~\ref{ssect:geomInt:GeneralCase}).

The third factor in \eqref{eq:conditionNew} is 
\[
   \begin{cases}
      1 + \ln \frac{\abs{\bar{K}}}{\Abs{K_{\min}}}, & \text{for $d = 2$}, \\
      \left(\frac{1}{N}\sum\limits_{K\in\cT_h} \left (\frac{\abs{\bar{K}}}{\Abs{K}}\right )^{\frac{d}{2}-1} \right)^{\frac{2}{d}},
         & \text{for $d\geq3$}.
\end{cases}
\]
It measures the effects of the mesh volume-nonuniformity (measured in the Euclidean metric) on the condition number.
Notice that there is no effect in 1D and  in 2D it is minimal.
In $d\geq 3$ dimensions the factor is proportional to the average of ${\Abs{K}}^{-1 + \frac{2}{d}}$ over all elements.
This is a significant improvement in comparison with previously available estimates which are proportional to $\Abs{K_{\min}}^{-1}$ \cite{Ern04} or $\Abs{K_{\min}}^{-1 + \frac{2}{d}}$ \cite{Fried73}.

\subsubsection{With scaling}
Bounds \eqref{eq:conditionNew:1:1} and \eqref{eq:conditionNew:1} for the scaled stiffness matrix have the same base bound as without scaling. 
Hence, diagonal scaling has no effect on the condition number when the mesh is uniform and $\D = I$.

Unlike \eqref{eq:conditionNew}, bounds \eqref{eq:conditionNew:1:1} and \eqref{eq:conditionNew:1} do not have the third factor which involves only the element volume (in comparison to the second factor which couples $(F_K^{'})^{-1}$ with $\D^{-1}$).
In this sense, a properly chosen diagonal scaling can eliminate the effects of the mesh volume-nonuniformity on the condition number. 
Moreover, scaling can also significantly reduce the effects of the mesh $\D$-nonuniformity.
Indeed, the factors in \eqref{eq:conditionNew:1:1} and \eqref{eq:conditionNew:1} that couple $(F_K^{'})^{-1}$ with $\D^{-1}$ are asymptotically the $L^{\max\{1,\frac{d}{2}\}}(\Omega)$ norm of $\norm{\FDF}_2$ whereas the corresponding factors in \eqref{eq:conditionNew:1D} and \eqref{eq:conditionNew} are basically the maximum norm.

Furthermore, the $\D$-related factor in \eqref{eq:conditionNew:1} for $d \geq 2$ can be rewritten in terms of  the alignment quality measure $Q_{ali,\D^{-1}}$ from Sect.~\ref{ssect:qualityMeasures} as
\begin{equation}
   \sum_{K\in\cT_h} \Abs{K} \Norm{\FDF}_2^{\frac{d}{2}}
\le d^{\frac d 2} \sum_{K\in\cT_h} Q_{ali,\D^{-1}}^{d-1}(K) \det(\D_K)^{\frac{1}{2}}.
\label{D-factor-1}
\end{equation}
Thus, the dependence of this $\D$-related factor on the element volume is also mild: both $Q_{ali,\D^{-1}}(K)$ and $\D_K$ (the average of $\D$ over $K$) are invariant under the scaling transformation of $K$.

The following special cases are instructional to understand the interplay of the factors for different types of meshes.

\begin{scase}[Uniform meshes]
\label{ex:uniform}
For a uniform mesh and $\D = I$, bounds \eqref{eq:conditionNew:1D}--\eqref{eq:conditionNew:1} yield
\[
   \kappa(A) \leq C N^{\frac{2}{d}}
   \quad \text{and} \quad
   \kappa(S^{-1} A S^{-1}) \leq C N^{\frac{2}{d}},
\]
which is the base bound.
Hence, the diagonal scaling has no effect on the condition number when the mesh is uniform and $\D = I$.
\end{scase}

\begin{scase}[Isotropic meshes, $\D=I$, $d\geq2$]
\label{BS89-3}
For an isotropic mesh and $\D = I$,
\[
\Abs{K} \sim h_K^{d} \quad \text{and} \quad \Norm{\FDF}_2 \sim h_K^{-2}.
\]
Therefore,
\[
   \frac{1}{N} \sum\limits_{K\in\cT_h} \Abs{K} 
      \Norm{\FDF}_2^{\frac{d}{2}}
   \lesssim \frac{1}{N} \sum\limits_{K\in\cT_h} h_K^d h_K^{-d} = 1
\]
and bound \eqref{eq:conditionNew:1} reduces to
\begin{equation}
   \kappa(S^{-1}AS^{-1})  \le C N^{\frac 2 d}
   \begin{cases}
      1 + \ln \frac{\abs{\bar{K}}}{\Abs{K_{\min}}},
         & \text{for $d = 2$,} \\
      1, & \text{for $d \geq 3$,}
   \end{cases}
   \label{BS89:1} 
\end{equation}
which is precisely the result of Bank and Scott~\cite[Theorems~4.2 and~5.2]{BanSco89}.
In this case, the diagonal scaling becomes
\[
s_j = \left( A_{jj} \right)^{\frac{1}{2}} =
  \Biggl( \sum_{K \in \omega_j} \Abs{K} \nabla\phi_j \cdot \nabla\phi_j \Biggr)^{\frac{1}{2}} 
   \sim \Biggl(\sum_{K \in \omega_j} h_K^{d-2}\Biggr)^{\frac{1}{2}} \sim h_j^{\frac{d-2}{2}},
\]
where $h_j$ denotes the average length of the elements around the $j^{\text{th}}$ vertex.
This  scaling is equivalent to the change of basis functions 
\[
   \phi_j \to h_j^{\frac{2- d}{2}} \phi_j,
\]
which is used in~\cite[Example~2.1]{BanSco89}.
\end{scase}

\begin{scase}[Uniform meshes with respect to $\D^{-1}$]
\label{rem:D-uniform}
For a mesh that is uniform with respect to $\D^{-1}$, i.e., \emph{coefficient adaptive}, we have properties \eqref{eq:M:u:mesh} and \eqref{D-factor-2}.
Bounds \eqref{eq:conditionNew:1D}-\eqref{eq:conditionNew:1} reduce to
\begin{align*}
   &\kappa(A) 
   \leq \frac{C \left(N \Abs{\omega_{\max}}\right) }{d_{\min}} \left (\frac{N}{\sigma_h}\right )^{\frac 2 d}
      \begin{cases}
         1, & \text{for $d = 1$},\\
         1 + \ln \frac{|\bar{K}|}{\Abs{K_{\min}}}, & \text{for $d = 2$}, \\
         \left(\frac{1}{N}\sum\limits_{K\in\cT_h} \left (\frac{\abs{\bar{K}}}{\Abs{K}}\right )^{\frac{d}{2}-1}
            \right)^{\frac{2}{d}}, & \text{for $d\geq 3$},
   \end{cases}
   \\
   &\kappa(S^{-1}AS^{-1}) 
      \leq \frac{C }{d_{\min}} \left (\frac{N}{\sigma_h}\right )^{\frac 2 d},\quad \text{for $d \geq 1$},
   \notag
\end{align*}
where $\sigma_h$ is defined in \eqref{sigma-1} and corresponds to the volume of the domain in the metric specified by $\D^{-1}$.
Thus, the condition number of the scaled stiffness matrix for a coefficient adaptive mesh has the optimal order of $\cO(N^{\frac{2}{d}})$.
\end{scase}

\begin{scase}[Aligned meshes, $d \geq 2$]
For meshes aligned with the diffusion matrix but not necessarily fully coefficient adaptive (i.e., isotropic but not uniform with respect to $\D^{-1}$) we have
\[
   Q_{ali,\D^{-1}}(K) = 1 \quad \text{but} \quad Q_{eq,\D^{-1}}(K) \neq 1 .
\]
From \eqref{D-factor-1}, bound \eqref{eq:conditionNew:1} becomes
\[
   \kappa(S^{-1} A S^{-1}) 
   \leq C \frac{N^{\frac{2}{d}}}{d_{\min}} 
      \left(\frac{1}{N} \sum_{K\in\cT_h} \det(\D_K)^{\frac{1}{2}} \right)^{\frac{2}{d}}
   \begin{cases}
      1 + \ln \frac{\abs{\bar{K}}}{\Abs{K_{\min}}}, & \text{for $d = 2$}, \\
      1, & \text{for $d \geq3$}.
   \end{cases}
\]
Aside from the term depending on $\det(\D)$, this bound is equivalent to \eqref{BS89:1}. 
Hence, the diagonal scaling almost eliminates the effects of the mesh on the condition number for $\D$-aligned meshes. 
\end{scase}

\begin{scase}[General $M$-uniform meshes]
Finally, let us consider general $M$-uniform meshes, i.e., meshes that are uniform in the metric specified by a given metric tensor $M$ which does not necessarily correspond to $\D^{-1}$.
In the context of mesh adaptation, an adaptive mesh is typically generated based on some estimate of the solution error and the associated metric tensor $M$ is solution dependent.
Thus, it is of interest to know what the impact of a given $M$ on the conditioning of the stiffness matrix is. 
Recall~\cite{Huang06} that an $M$-uniform mesh satisfies
\[
   (F_K')^{-T} (F_K')^{-1} = \left( \frac{N}{\sigma_{h,M}} \right)^{\frac{2}{d}} M_K,
\]
where $M_K$ is some average of $M$ on $K$ and $\sigma_{h,M}$ is defined as in \eqref{sigma-1} but with $\D$ replaced by $M^{-1}$.
We have
\[
   \Norm{\FDF}_2 \leq  \left (\frac{N}{\sigma_{h,M}}\right )^{\frac 2 d} \Norm{M_K\D_K}_2
\]
and therefore
\[
   \kappa(S^{-1}AS^{-1}) 
   \leq \frac{C }{d_{\min}} \left (\frac{N}{\sigma_{h,M}}\right )^{\frac 2 d} 
   \left(\sum_{K\in \cT_h} \Abs{K} \Norm{M_K \D_K}_2^\frac{d}{2}\right)^{\frac{2}{d}}.
\]
Hence, the bound on the condition number after diagonal scaling for an $M$-uniform mesh depends only on the volume-weighted average of $\Norm{M_K \D_K}_2^{{d}/{2}}$ or, asymptotically, the $L^{d/2}$ norm of $\Norm{M \D}_2$.
For many problems such as those having boundary layers and shock waves, mesh elements are typically concentrated in a small portion of the physical domain.
In that situation, we would expect that $M$ differs significantly from $\D^{-1}$ only in small regions.
As a consequence, the volume-weighted average of $\Norm{M_K \D_K}_2^{{d}/{2}}$ over the whole domain may remain small and therefore the condition number of the scaled stiffness matrix for anisotropic adaptive meshes does not necessarily increase as much as generally feared.

This effect can be observed in Examples~\ref{ex:2dGeneric} and~\ref{ex:3dGeneric}.
Figures~\ref{fig:2d:arN} and~\ref{fig:3d:ar:n12} show that the effects of anisotropic adaptation are completely neutralized by the diagonal scaling when the number of anisotropic elements is small in comparison to $N$.
\end{scase}

\section{Numerical experiments}
\label{sect:numericalExperiments}

In this section we present numerical results for a selection of one-, two-, and three-dimensional examples to illustrate our theoretical findings.

Note that all bounds on the smallest eigenvalue contain a constant $C$.
We obtain its value by calibrating the bound for $\lambda_{\min}(S^{-1}AS^{-1})$ with Delaunay (Example~\ref{ex:a1}) or uniform meshes (all other examples) through comparing the exact and estimated values.
For the largest eigenvalue we use explicit bounds \eqref{eq:lambdaMaxA} and \eqref{eq:lambdaMaxScaled}.

First, we give examples with predefined meshes to demonstrate the influence of the number and shape of mesh elements on the condition number of the stiffness matrix and to verify the improvement achieved with the diagonal scaling.
For the tests, we employ the Laplace operator (i.e. $\D = I$) and a mesh on the unit interval, square, and cube, for 1D, 2D, and 3D, respectively.

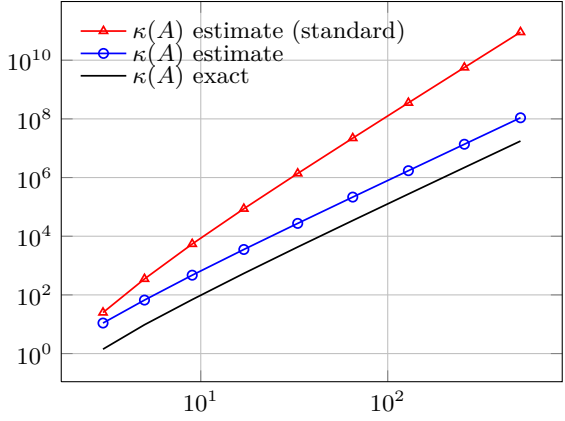
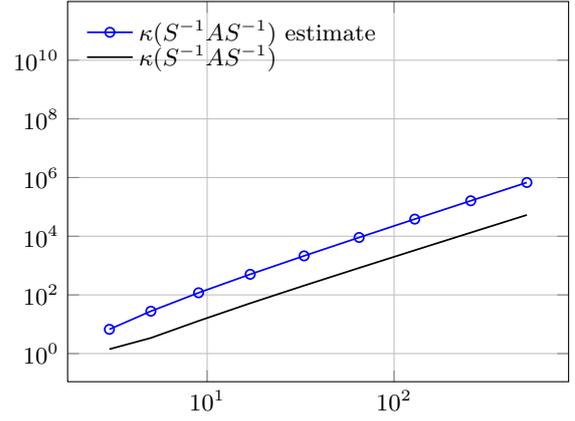
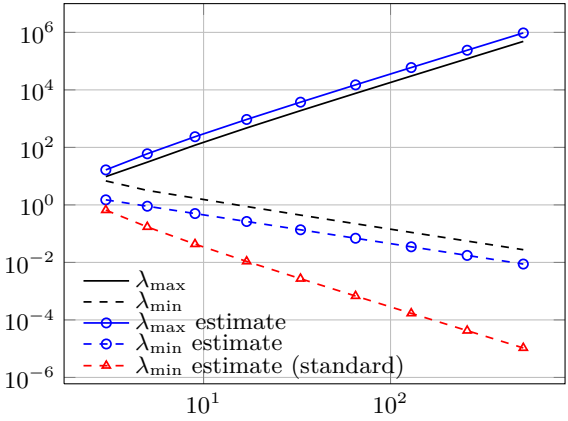
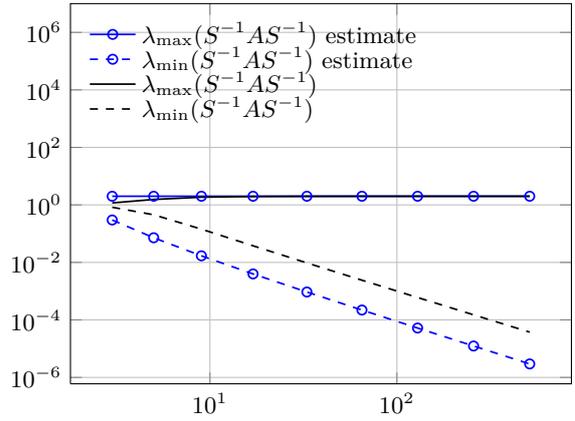
\begin{figure}[t] \centering
   \begin{subfigure}[t]{0.47\textwidth}
      \centering
      \footnotesize{
      \begin{tikzpicture}
         \begin{loglogaxis}[
            width=0.85\textwidth, height=0.65\textwidth,
            scale only axis,
            legend cell align=left,
            ymin=1.1e-01, ymax=1.0e+12,
            ytick={1.0e+00,1.0e+02,1.0e+04,1.0e+06, 1.0e+08, 1.0e+10},
            ymajorgrids,
            xmajorgrids, xminorticks=false,
            legend style={at={(0.02,0.98)},anchor=north west,align=left,fill=none,draw=none,row sep=-0.4em}
         ]
         \addplot[color=\MyColorAlert, solid, line width=\MyLineWidth, mark size=\MyMarkSize,
               mark=triangle, mark options={solid} ]
            table [x index=0, y index=3, col sep = space] {\DataPath/chebyshev-kappa.dat};
         \addlegendentry{$\kappa(A)$ estimate (standard)}
         \addplot[color=\MyColorEsti, solid, line width=\MyLineWidth, mark size=\MyMarkSize, 
               mark=o, mark options={solid} ]
            table [x index=0, y index=2, col sep = space] {\DataPath/chebyshev-kappa.dat};
         \addlegendentry{$\kappa(A)$ estimate}
         \addplot[color=\MyColorExact, solid, line width=\MyLineWidth]  
            table [x index=0, y index=1, col sep = space] {\DataPath/chebyshev-kappa.dat};
         \addlegendentry{$\kappa(A)$ exact}
           \end{loglogaxis}
      \end{tikzpicture}
      }
      \caption{Condition number.}
      \label{fig:cosCond}
   \end{subfigure}
   \quad
   \begin{subfigure}[t]{0.47\textwidth}
      \centering
      \footnotesize{
      \begin{tikzpicture}
         \begin{loglogaxis}[
            width=0.85\textwidth, height=0.65\textwidth,
            scale only axis,
            legend cell align=left,
            ymin=1.1e-01, ymax=1.0e+12,
            ytick={1.0e+00,1.0e+02,1.0e+04,1.0e+06, 1.0e+08, 1.0e+10},
            ymajorgrids,
            xmajorgrids, xminorticks=false,
            legend style={at={(0.02,0.98)},anchor=north west,align=left,fill=none,draw=none,row sep=-0.4em}
         ]
         \addplot[color=\MyColorEsti, solid, line width=\MyLineWidth, mark size=\MyMarkSize, 
               mark=o, mark options={solid} ]
            table [x index=0, y index=5, col sep = space] {\DataPath/chebyshev-kappa.dat};
         \addlegendentry{$\kappa(S^{-1}AS^{-1})$ estimate}
         \addplot[color=\MyColorExact, solid, line width=\MyLineWidth]  
            table [x index=0, y index=4, col sep = space] {\DataPath/chebyshev-kappa.dat};
         \addlegendentry{$\kappa(S^{-1}AS^{-1})$ }
          \end{loglogaxis}
      \end{tikzpicture}
      }
      \caption{Condition number after scaling.}
      \label{fig:cosCondScaled}
   \end{subfigure}
   \\[0.75em]
   \begin{subfigure}[t]{0.47\textwidth}
      \centering
      \footnotesize{
      \begin{tikzpicture}
         \begin{loglogaxis}[
            width=0.85\textwidth, height=0.65\textwidth,
            scale only axis,
            legend cell align=left,
            ymin=6.0e-07, ymax=1.0e+07,
            ytick={1.0e-06,1.0e-04,1.0e-02,1.0e+00,1.0e+02,1.0e+04,1.0e+06},
            ymajorgrids,
            xmajorgrids, xminorticks=false,
            legend style={at={(0.02,-0.01)},anchor=south west,align=left,fill=none,draw=none,row sep=-0.4em}
         ]
         \addplot[color=\MyColorExact, solid, line width=\MyLineWidth]  
            table [x index=0, y index=1, col sep = space] {\DataPath/chebyshev-lMax.dat};
         \addlegendentry{$\lambda_{\max}$}
         \addplot[color=\MyColorExact, dashed, line width=\MyLineWidth]  
            table [x index=0, y index=1, col sep = space] {\DataPath/chebyshev-lMin.dat};
         \addlegendentry{$\lambda_{\min}$}
         \addplot[color=\MyColorEsti, solid, line width=\MyLineWidth, mark size=\MyMarkSize, 
               mark=o, mark options={solid}]
            table [x index=0, y index=2, col sep = space] {\DataPath/chebyshev-lMax.dat};
         \addlegendentry{$\lambda_{\max}$ estimate}
         \addplot[color=\MyColorEsti, dashed, line width=\MyLineWidth, mark size=\MyMarkSize,
               mark=o, mark options={solid}]
            table [x index=0, y index=2, col sep = space] {\DataPath/chebyshev-lMin.dat};
         \addlegendentry{$\lambda_{\min}$ estimate}
         \addplot[color=\MyColorAlert, dashed, line width=\MyLineWidth, mark size=\MyMarkSize, mark=triangle, mark options={solid}]
            table [x index=0, y index=3, col sep = space] {\DataPath/chebyshev-lMin.dat};
         \addlegendentry{$\lambda_{\min}$ estimate (standard)}
         \end{loglogaxis}
      \end{tikzpicture}
      }
      \caption{Extreme eigenvalues.}
      \label{fig:cosEigv}
   \end{subfigure}
   \quad
   \begin{subfigure}[t]{0.47\textwidth}
      \centering
      \footnotesize{
      \begin{tikzpicture}
         \begin{loglogaxis}[
            width=0.85\textwidth, height=0.65\textwidth,
            scale only axis,
            legend cell align=left,
            ymin=6.0e-07, ymax=1.0e+07,
            ytick={1.0e-06,1.0e-04,1.0e-02,1.0e+00,1.0e+02,1.0e+04,1.0e+06},
            ymajorgrids,
            xmajorgrids, xminorticks=false,
            legend style={at={(0.02,0.98)},anchor=north west,align=left,fill=none,draw=none,row sep=-0.4em}
         ]
         \addplot[color=\MyColorEsti, solid, line width=\MyLineWidth, mark size=\MyMarkSize,
               mark=o, mark options={solid}]
            table [x index=0, y index=5, col sep = space] {\DataPath/chebyshev-lMax.dat};
         \addlegendentry{$\lambda_{\max}(S^{-1}AS^{-1})$ estimate}
         \addplot[color=\MyColorEsti, dashed, line width=\MyLineWidth, mark size=\MyMarkSize,
               mark=o, mark options={solid}]
            table [x index=0, y index=5, col sep = space] {\DataPath/chebyshev-lMin.dat};
         \addlegendentry{$\lambda_{\min}(S^{-1}AS^{-1})$ estimate}
         \addplot[color=\MyColorExact, solid, line width=\MyLineWidth]  
            table [x index=0, y index=4, col sep = space] {\DataPath/chebyshev-lMax.dat};
         \addlegendentry{$\lambda_{\max}(S^{-1}AS^{-1})$}
         \addplot[color=\MyColorExact, dashed, line width=\MyLineWidth]  
            table [x index=0, y index=4, col sep = space] {\DataPath/chebyshev-lMin.dat};
         \addlegendentry{$\lambda_{\min}(S^{-1}AS^{-1})$}
        \end{loglogaxis}
      \end{tikzpicture}
      }
      \caption{Extreme eigenvalues after scaling.}
      \label{fig:cosEigvScaled}
   \end{subfigure}
   \caption{Example~\ref{ex:1dCos}: Exact and estimated condition number and eigenvalues of
   the stiffness matrix as a function of $N$ ($d=1$).}
   \label{fig:cosExample}
\end{figure}

\begin{example}[$d=1$, $\D = I$, Chebyshev nodes]
\label{ex:1dCos}
For a simple one-dimensional example we choose  a mesh given by
Chebyshev nodes in the interval $[0,1]$,
\begin{equation}
   x_i =  \frac{1}{2} \Biggl( 1 - \cos\left( \frac{2i -1}{2 \left(N-1\right)} \pi\right) \Biggr), \quad i = 1,\dotsc, N-1.
\label{chebyshev-1}
\end{equation}
The exact condition number of the stiffness matrix and its estimates \eqref{eq:conditionNew:1D} and \eqref{eq:conditionNew:1:1} are shown in Figs.~\ref{fig:cosCond} (without scaling) and~\ref{fig:cosCondScaled} (with scaling) while those for the extreme eigenvalues and their estimates are given in Figs.~\ref{fig:cosEigv} (without scaling) and~\ref{fig:cosEigvScaled} (with scaling).

Figure~\ref{fig:cosCond} shows that the estimate \eqref{eq:conditionNew:1D} is much sharper than the standard estimate with $\lambda_{\min}(A) \propto \Abs{K_{\min}}$.
The former has the same asymptotic order as the exact value as $N$ increases, whereas the latter is too pessimistic and has a higher asymptotic order.
The difference is caused by the estimate of the smallest eigenvalue (Fig.~\ref{fig:cosEigv}).
Notice that the estimates on the largest eigenvalue are very tight, both for the scaled and the unscaled cases.

The results clearly show the benefits of diagonal scaling: the order for the condition number of the scaled stiffness matrix in Fig.~\ref{fig:cosCondScaled} is $\cO(N^2 \ln N)$, which is almost the same as for uniform meshes, whereas that without scaling in Fig.~\ref{fig:cosCond} is $\cO(N^3)$.
It can be shown analytically that the orders of the nonuniformity factors in \eqref{eq:conditionNew:1D} and \eqref{eq:conditionNew:1:1} for the Chebyshev nodes defined with \eqref{chebyshev-1} are $\cO(N)$ and $\cO(\ln N)$ and those of the corresponding condition numbers are $\cO(N^3)$ and $\cO(N^2 \ln N)$.

Thus, the numerical and theoretical results are consistent and the improvement by diagonal scaling from the maximum norm to the $L^2$ norm is significant in this example.
\end{example}

\begin{example}[$d=2$, $\D = I$, anisotropic elements in a unit square]
\label{ex:2dGeneric}
For this 2D example we use a mesh for the unit square $[0,1]\times[0,1]$ with $\cO(N^{1/2})$ skew elements, as shown in Fig.~\ref{fig:2d:exampleN}.
First, we fix the maximum aspect ratio at $125:1$ and increase $N$ to verify the dependence of the condition number on $N$ (Fig.~\ref{fig:2d:arN}).
Then, we fix $N$ at 20,000 and change the maximum aspect ratio of the mesh elements to investigate the dependence of the conditioning on the mesh shape (Fig.~\ref{fig:2d:fixedNN}).

Figure~\ref{fig:2d:arN} shows the averaging effect of the diagonal scaling: the scaling significantly reduces the condition number and, when $N$ becomes large enough, the conditioning of a scaled system is comparable to the condition number on a uniform mesh.
Moreover, the estimated value of the condition number with or without scaling has the same order as the exact value as $N$ increases.

Figure~\ref{fig:2d:fixedNN} provides a good numerical validation of \eqref{eq:conditionNew}, namely that the condition number of the unscaled stiffness matrix is linearly proportional to the largest aspect ratio\footnote{In 2D with $\D=I$, the nonuniformity term in \eqref{eq:conditionNew} is equivalent to the aspect ratio.}.
With scaling, the condition number is still increasing with an increasing aspect ratio, since the average aspect ratio (in accordance to \eqref{eq:conditionNew:1}) is also increasing.
Nevertheless, the condition number after scaling is smaller by a factor of $10$.

Figure~\ref{fig:2d:fixedNN} also shows that our estimate of the condition number with scaling has the same (linear) order as the exact value as the maximum aspect ratio increases, whereas the bounds for the unscaled case has a slightly higher order.
This indicates that the estimation can be further improved.

As for the estimates on the extreme eigenvalues, the results are mainly the same as in Example~\ref{ex:1dCos}.
For this reason, we omit them in 2D and 3D to save space.
\end{example}

\begin{figure}[p] \centering
   \begin{subfigure}[t]{0.48\textwidth}
      \centering
      \includegraphics[width=0.5\textwidth,clip]{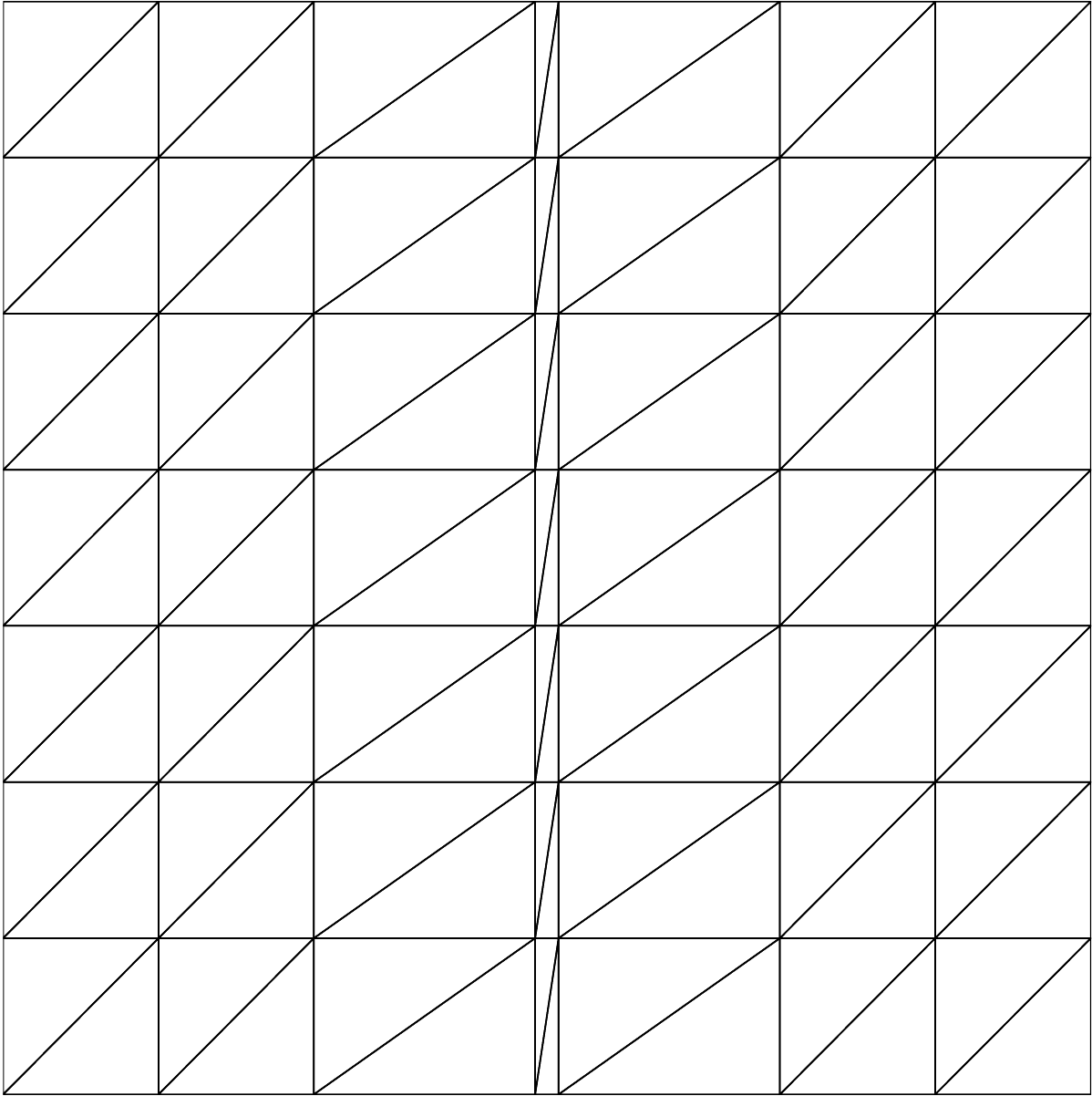}
      \caption{$\cO(N^{1/2})$ skew elements.}
      \label{fig:2d:exampleN}
   \end{subfigure}
   \begin{subfigure}[t]{0.48\textwidth}
      \centering
      \includegraphics[width=0.5\textwidth,clip]{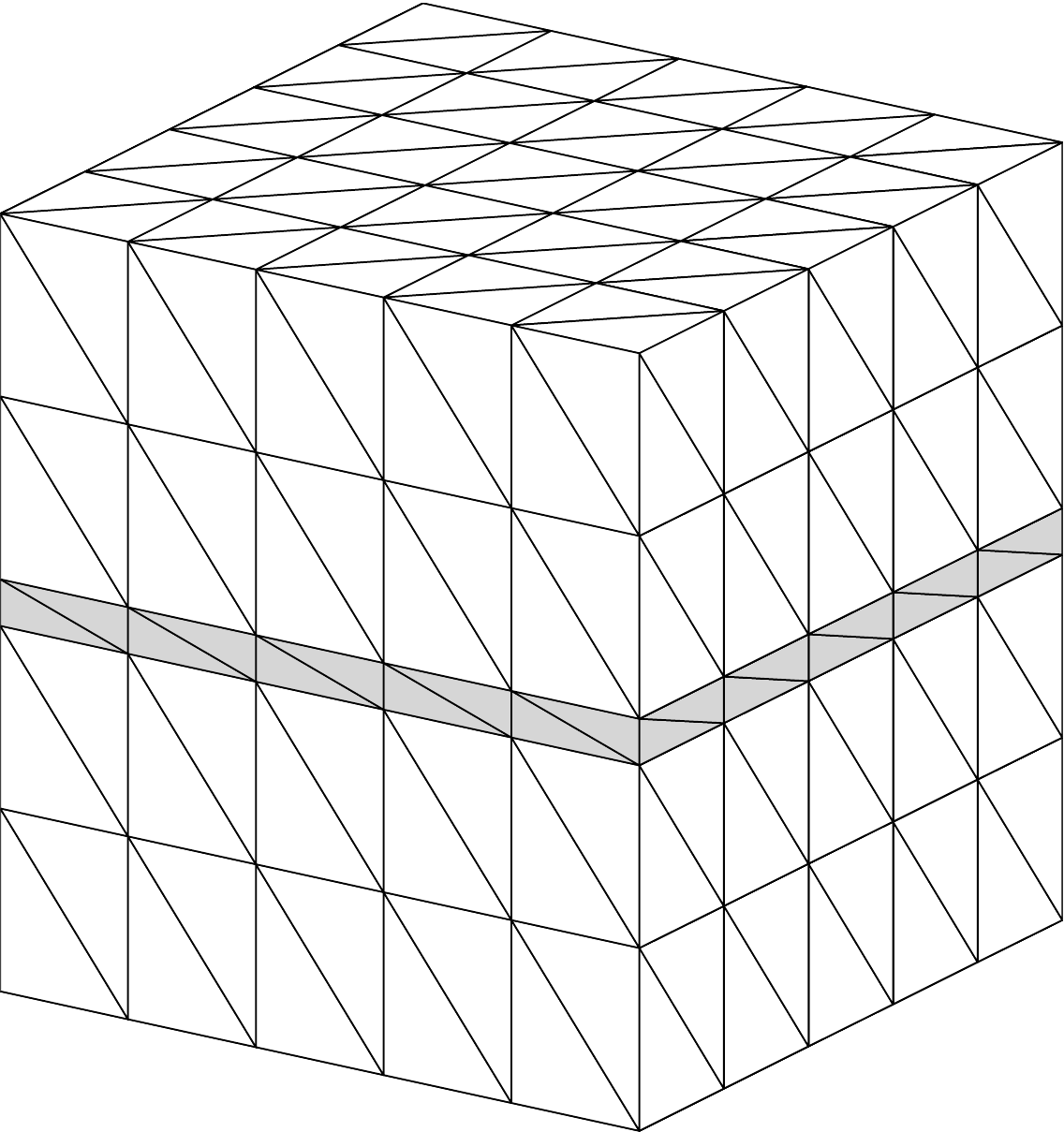}
      \caption{$\cO(N^{2/3})$ skew elements.}
      \label{fig:3d:exampleN}
   \end{subfigure}
   \caption{Predefined meshes for 
      (\subref{fig:2d:exampleN}) Example~\ref{ex:2dGeneric} and
      (\subref{fig:3d:exampleN}) Example~\ref{ex:3dGeneric}.}
   \label{fig:exampleN}
\end{figure}

\begin{figure}[p]
   \begin{subfigure}[t]{0.48\textwidth}
      \centering
      \footnotesize{ 
      \begin{tikzpicture}
      \begin{loglogaxis}[
            width=0.85\textwidth, height=0.65\textwidth,
            scale only axis,
            legend cell align=left,
            ymajorgrids, yminorticks=false, 
            xmajorgrids, xminorticks=false,
            legend style={at={(0.02,1.00)},anchor=north west,align=left,fill=none,draw=none,row sep=-0.4em}
         ]
         \addplot[color=\MyColorEsti, solid, line width=\MyLineWidth, mark size=\MyMarkSize,
               mark=o, mark options={solid}]
            table [x index=0, y index=2, col sep = space] {\DataPath/n2-kappa.dat};
         \addlegendentry{$\kappa(A)$ estimate}
         \addplot[color=\MyColorEsti, dashed, line width=\MyLineWidth, mark size=\MyMarkSize,
               mark=o, mark options={solid}]
            table [x index=0, y index=5, col sep = space] {\DataPath/n2-kappa.dat};
         \addlegendentry{$\kappa(S^{-1}AS^{-1})$ estimate}
         \addplot[color=\MyColorAlert, dotted, line width=\MyLineWidthDotted]
            table [x index=0, y index=1, col sep = space] {\DataPath/uniform2-kappa.dat};
         \addlegendentry{with uniform mesh}
         \addplot[color=\MyColorExact, solid, line width=\MyLineWidth]  
            table [x index=0, y index=1, col sep = space] {\DataPath/n2-kappa.dat};
         \addlegendentry{$\kappa(A)$}
         \addplot[color=\MyColorExact, dashed, line width=\MyLineWidth]  
            table [x index=0, y index=4, col sep = space] {\DataPath/n2-kappa.dat};
         \addlegendentry{$\kappa(S^{-1}AS^{-1})$}
           \end{loglogaxis}
      \end{tikzpicture}
      }
      \caption{Aspect ratio $125:1$, changing $N$.}
      \label{fig:2d:arN}
   \end{subfigure}
   \quad
   \begin{subfigure}[t]{0.48\textwidth}
      \centering
      \footnotesize{
      \begin{tikzpicture}
      \begin{loglogaxis}[
            width=0.85\textwidth, height=0.65\textwidth,
            scale only axis,
            legend cell align=left,
            ymajorgrids, yminorticks=false, 
            xmajorgrids, xminorticks=false,
            legend style={at={(0.02,1.00)},anchor=north west,align=left,fill=none,draw=none,row sep=-0.4em}
         ]
         \addplot[color=\MyColorEsti, solid, line width=\MyLineWidth, mark size=\MyMarkSize, 
               mark=o, mark options={solid}]
            table [x index=8, y index=2, col sep = space] {\DataPath/ar2-kappa.dat};
         \addlegendentry{$\kappa(A)$ estimate}
         \addplot[color=\MyColorEsti, dashed, line width=\MyLineWidth, mark size=\MyMarkSize, 
               mark=o, mark options={solid}]
            table [x index=8, y index=5, col sep = space] {\DataPath/ar2-kappa.dat};
         \addlegendentry{$\kappa(S^{-1}AS^{-1})$ estimate}
         \addplot[color=\MyColorExact, solid, line width=\MyLineWidth]  
            table [x index=8, y index=1, col sep = space] {\DataPath/ar2-kappa.dat};
         \addlegendentry{$\kappa(A)$}
         \addplot[color=\MyColorExact, dashed, line width=\MyLineWidth]  
            table [x index=8, y index=4, col sep = space] {\DataPath/ar2-kappa.dat};
         \addlegendentry{$\kappa(S^{-1}AS^{-1})$}
          \end{loglogaxis}
      \end{tikzpicture}
      }
      \caption{$N = 20,000$, changing aspect ratio.}
      \label{fig:2d:fixedNN}
      \end{subfigure}
      \caption{Example~\ref{ex:2dGeneric}: Condition number before and after scaling 
      for a predefined 2D mesh (Fig.~\ref{fig:2d:exampleN}) as a function of 
      (\subref{fig:2d:arN}) the number of mesh elements and
      (\subref{fig:2d:fixedNN}) the maximum element aspect ratio.}
   \label{fig:2dGeneric}
\end{figure}

\begin{figure}[p]
   \begin{subfigure}[t]{0.48\textwidth}
      \centering
      \footnotesize{ 
      \begin{tikzpicture}
      \begin{loglogaxis}[
            width=0.85\textwidth, height=0.65\textwidth,
            scale only axis,
            legend cell align=left,
            ymajorgrids, yminorticks=false, 
            xmajorgrids, xminorticks=false,
            legend style={at={(0.02,1.00)},anchor=north west,align=left,fill=none,draw=none,row sep=-0.4em}
         ]
         \addplot[color=\MyColorEsti, solid, line width=\MyLineWidth, mark size=\MyMarkSize,
               mark=o, mark options={solid}]
            table [x index=0, y index=2, col sep = space] {\DataPath/n3-kappa.dat};
         \addlegendentry{$\kappa(A)$ estimate}
         \addplot[color=\MyColorEsti, dashed, line width=\MyLineWidth, mark size=\MyMarkSize,
               mark=o, mark options={solid}]
            table [x index=0, y index=5, col sep = space] {\DataPath/n3-kappa.dat};
         \addlegendentry{$\kappa(S^{-1}AS^{-1})$ estimate}
         \addplot[color=\MyColorAlert, dotted, line width=\MyLineWidthDotted]
            table [x index=0, y index=1, col sep = space] {\DataPath/uniform3-kappa.dat};
         \addlegendentry{with uniform mesh}
         \addplot[color=\MyColorExact, solid, line width=\MyLineWidth]  
            table [x index=0, y index=1, col sep = space] {\DataPath/n3-kappa.dat};
         \addlegendentry{$\kappa(A)$}
         \addplot[color=\MyColorExact, dashed, line width=\MyLineWidth]  
            table [x index=0, y index=4, col sep = space] {\DataPath/n3-kappa.dat};
         \addlegendentry{$\kappa(S^{-1}AS^{-1})$}
           \end{loglogaxis}
      \end{tikzpicture}
      }
      \caption{Aspect ratio $25:25:1$, changing N.}
      \label{fig:3d:ar:n12}
   \end{subfigure}
   \quad
   \begin{subfigure}[t]{0.48\textwidth}
      \centering
      \footnotesize{
      \begin{tikzpicture}
      \begin{loglogaxis}[
            width=0.85\textwidth, height=0.65\textwidth,
            scale only axis,
            legend cell align=left,
            ymajorgrids, yminorticks=false, 
            xmajorgrids, xminorticks=false,
            legend style={at={(0.02,1.00)},anchor=north west,align=left,fill=none,draw=none,row sep=-0.4em}
         ]
         \addplot[color=\MyColorEsti, solid, line width=\MyLineWidth, mark size=\MyMarkSize, 
               mark=o, mark options={solid}]
            table [x index=8, y index=2, col sep = space] {\DataPath/ar3-kappa.dat};
         \addlegendentry{$\kappa(A)$ estimate}
         \addplot[color=\MyColorEsti, dashed, line width=\MyLineWidth, mark size=\MyMarkSize, 
               mark=o, mark options={solid}]
            table [x index=8, y index=5, col sep = space] {\DataPath/ar3-kappa.dat};
         \addlegendentry{$\kappa(S^{-1}AS^{-1})$ estimate}
         \addplot[color=\MyColorExact, solid, line width=\MyLineWidth]  
            table [x index=8, y index=1, col sep = space] {\DataPath/ar3-kappa.dat};
         \addlegendentry{$\kappa(A)$}
         \addplot[color=\MyColorExact, dashed, line width=\MyLineWidth]  
            table [x index=8, y index=4, col sep = space] {\DataPath/ar3-kappa.dat};
         \addlegendentry{$\kappa(S^{-1}AS^{-1})$}
          \end{loglogaxis}
      \end{tikzpicture}
      }
      \caption{$N = 29,478$, changing aspect ratio.}
      \label{fig:3d:fixedN:n12}
      \end{subfigure}
      \caption{Example~\ref{ex:3dGeneric}: Condition number before and after scaling 
         for a predefined 3D mesh (Fig.~\ref{fig:3d:exampleN}) as a function of 
         (\subref{fig:3d:ar:n12}) the number of mesh elements and 
         (\subref{fig:3d:fixedN:n12}) the maximum element aspect ratio.}
      \label{fig:3dGeneric}
\end{figure}

\begin{example}[$d=3$; anisotropic elements in a unit cube]
\label{ex:3dGeneric}

In this example, we repeat the same test setting as in Example~\ref{ex:2dGeneric}: fixed anisotropy ($25:25:1$) with increasing number of elements (Fig.~\ref{fig:3d:ar:n12}) and a fixed $N = 29,478$ paired with the changing anisotropy of the mesh (Fig.~\ref{fig:3d:fixedN:n12}).
The results shown in Fig.~\ref{fig:3dGeneric} are essentially the same as in 2D.
Since the mesh used in this example has a larger share of skew elements ($\cO(N^{-1/3})$) than the mesh used in Example~\ref{ex:2dGeneric} ($\cO(N^{-1/2})$), it is reasonable to expect that the averaging effect of diagonal scaling is less effective.
This can be seen in Fig.~\ref{fig:3dGeneric} where the exact condition numbers with and without scaling stay closer than in Fig.~\ref{fig:2dGeneric}.

Figure~\ref{fig:3dGeneric} shows that the bounds on the condition number with and without scaling have the same asymptotic order as the exact values as $N$ increases.
However, they have higher orders as the maximum aspect ratio increases for a fixed $N$.
As in the previous example, this indicates that the estimation can be further improved.
\end{example}

In the next example, we consider an adaptive finite element solution of an anisotropic diffusion problem with different meshes.
\begin{figure}[t]
   \centering
   \begin{subfigure}[b]{0.49\textwidth}
      \centering
      \begin{tikzpicture}
         \draw [thick] (0,0) -- (0,4) -- (4, 4) -- (4, 0) -- (0, 0);
         \draw [thick] (1.8,1.8) -- (1.8,2.2) -- (2.2, 2.2) -- (2.2, 1.8) -- (1.8, 1.8);
         \draw(2, -0.35) node {$ u = 0 $};
         \draw(4.8, 2) node {$ \Gamma_{out} $};
         \draw(2, 1.4) node {$ u = 2 $};
         \draw(2.9, 2) node {$ \Gamma_{in} $};
   \end{tikzpicture}
      \caption{Boundary conditions.}
      \label{fig:a1_domain}
   \end{subfigure}
   \begin{subfigure}[b]{0.49\textwidth}
      \centering
      \includegraphics[width=0.66\textwidth,clip]{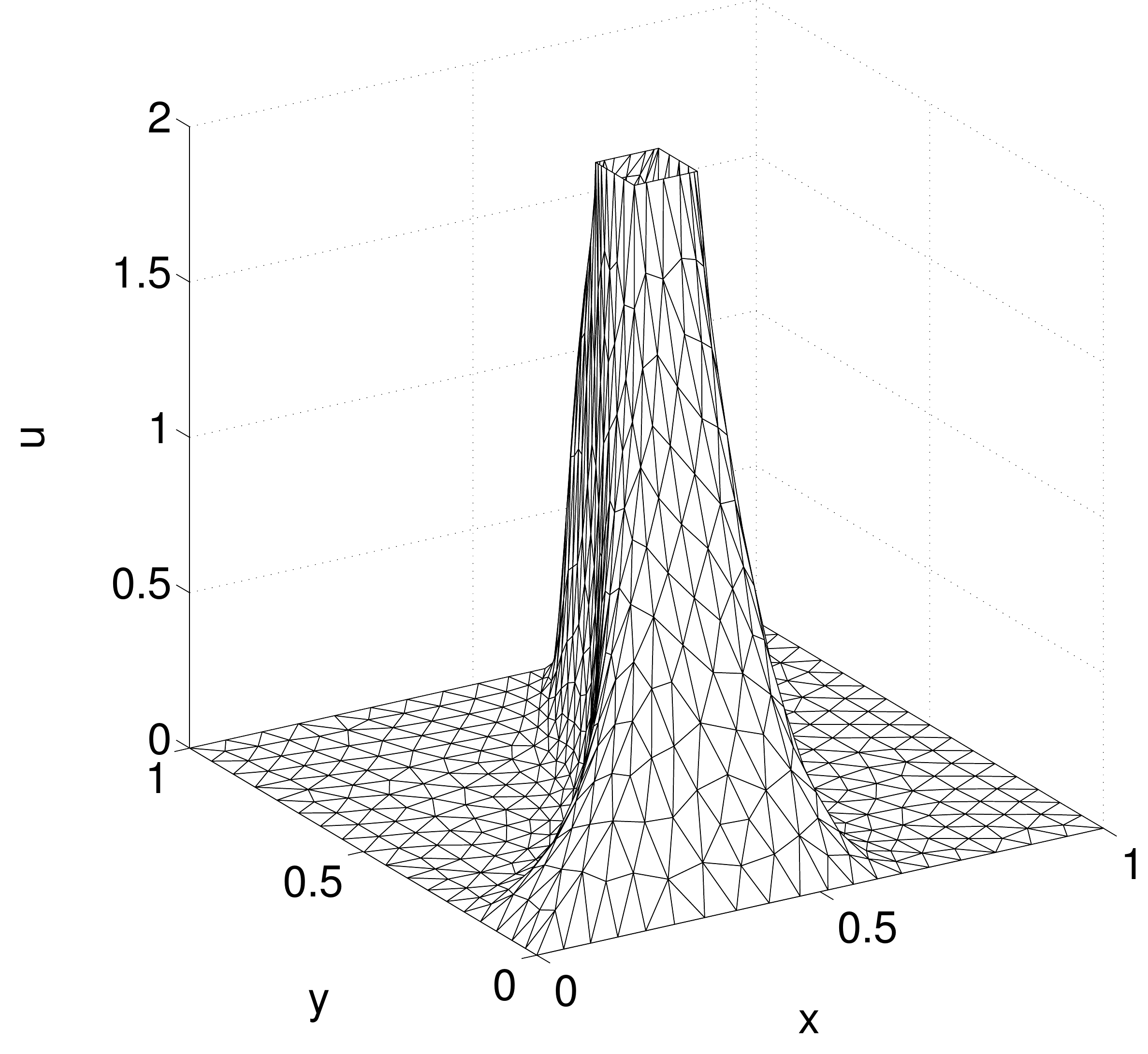}
      \caption{Numerical solution.}
      \label{fig:a1_solution}
   \end{subfigure}
   \caption{Example~\ref{ex:a1}: Boundary conditions and a numerical solution. }
   \label{fig:a1_problem}
\end{figure}

\begin{figure}[p] \centering
   \begin{subfigure}[t]{1.0\textwidth}
      \centering
      \includegraphics[height=0.2\textheight]{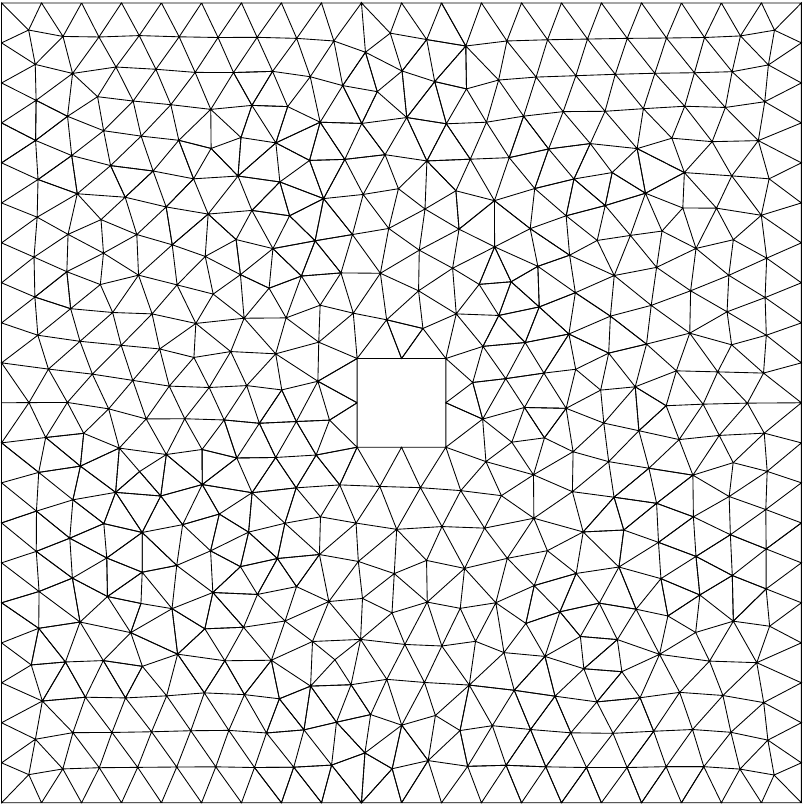}
      \qquad
      \footnotesize{ 
      \begin{tikzpicture}
      \begin{loglogaxis}[
            width=0.45\textwidth, height=0.25\textwidth,
            scale only axis,
            legend cell align=left,
            ymin=1.0e+00, ymax=5.0e+06, ytick={1.0e+01,1.0e+03,1.0e+05,1.0e+07},
            ymajorgrids, yminorticks=false, 
            xmin=2.0e+01, xmax=4.0e+05,
            xmajorgrids, xminorticks=false,
            legend style={at={(0.02,1.00)},anchor=north west,align=left,fill=none,draw=none,row sep=-0.4em}
         ]
         \addplot[color=\MyColorEsti, solid, line width=\MyLineWidth, mark size=\MyMarkSize, 
               mark=o, mark options={solid}]
            table [x index=0, y index=2, col sep = space] {\DataPath/aniso1hb-07-kappa.dat};
         \addlegendentry{$\kappa(A)$ estimate}
         \addplot[color=\MyColorEsti, dashed, line width=\MyLineWidth, mark size=\MyMarkSize, 
               mark=o, mark options={solid}]
            table [x index=0, y index=5, col sep = space] {\DataPath/aniso1hb-07-kappa.dat};
         \addlegendentry{$\kappa(S^{-1}AS^{-1})$ estimate}
         \addplot[color=\MyColorExact, solid, line width=\MyLineWidth]  
            table [x index=0, y index=1, col sep = space] {\DataPath/aniso1hb-07-kappa.dat};
         \addlegendentry{$\kappa(A)$}
         \addplot[color=\MyColorExact, dashed, line width=\MyLineWidth]  
            table [x index=0, y index=4, col sep = space] {\DataPath/aniso1hb-07-kappa.dat};
         \addlegendentry{$\kappa(S^{-1}AS^{-1})$}
          \end{loglogaxis}
      \end{tikzpicture}
      }
      \caption{Quasi-uniform (Delaunay) mesh, $M = I$.}
      \label{fig:a1:7}
   \end{subfigure}
   \\[0.7em]
   \begin{subfigure}[t]{1.0\textwidth}
      \centering
      \includegraphics[height=0.2\textheight]{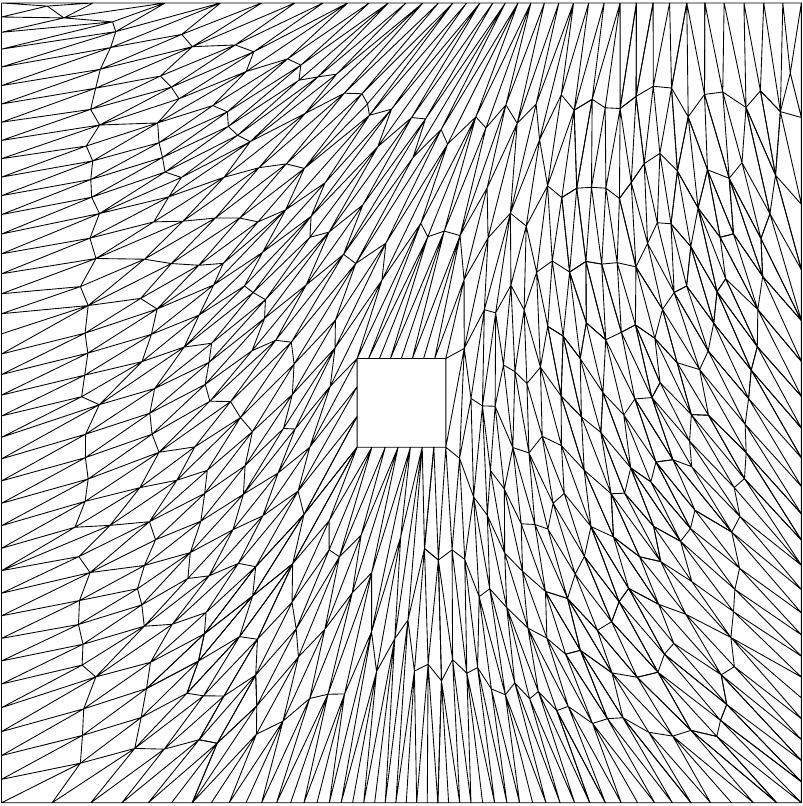}
      \qquad
      \footnotesize{ 
      \begin{tikzpicture}
      \begin{loglogaxis}[
            width=0.45\textwidth, height=0.25\textwidth,
            scale only axis,
            legend cell align=left,
            ymin=1.0e+00, ymax=5.0e+06, ytick={1.0e+01,1.0e+03,1.0e+05,1.0e+07},
            ymajorgrids, yminorticks=false, 
            xmin=2.0e+01, xmax=4.0e+05,
            xmajorgrids, xminorticks=false,
            legend style={at={(0.02,1.00)},anchor=north west,align=left,fill=none,draw=none,row sep=-0.4em}
         ]
         \addplot[color=\MyColorEsti, solid, line width=\MyLineWidth, mark size=\MyMarkSize,
               mark=o, mark options={solid}]
            table [x index=0, y index=2, col sep = space] {\DataPath/aniso1hb-10-kappa.dat};
         \addlegendentry{$\kappa(A)$ estimate}
         \addplot[color=\MyColorEsti, dashed, line width=\MyLineWidth,mark size=\MyMarkSize,
               mark=o, mark options={solid}]
            table [x index=0, y index=5, col sep = space] {\DataPath/aniso1hb-10-kappa.dat};
         \addlegendentry{$\kappa(S^{-1}AS^{-1})$ estimate}
         \addplot[color=\MyColorAlert, dotted, line width=\MyLineWidthDotted]
            table [x index=0, y index=1, col sep = space]  {\DataPath/aniso1hb-07-kappa.dat};
         \addlegendentry{$\kappa(A)$ Delaunay}
         \addplot[color=\MyColorExact, solid, line width=\MyLineWidth]  
            table [x index=0, y index=1, col sep = space] {\DataPath/aniso1hb-10-kappa.dat};
         \addlegendentry{$\kappa(A)$}
         \addplot[color=\MyColorExact, dashed, line width=\MyLineWidth]  
            table [x index=0, y index=4, col sep = space] {\DataPath/aniso1hb-10-kappa.dat};
         \addlegendentry{$\kappa(S^{-1}AS^{-1})$}
         \end{loglogaxis}
      \end{tikzpicture}
      }
      \caption{Coefficient-adaptive mesh, $M = \D^{-1}$.}
      \label{fig:a1:10}
   \end{subfigure}
   \\[0.7em]
   \begin{subfigure}[t]{1.0\textwidth}
      \centering
      \includegraphics[height=0.2\textheight]{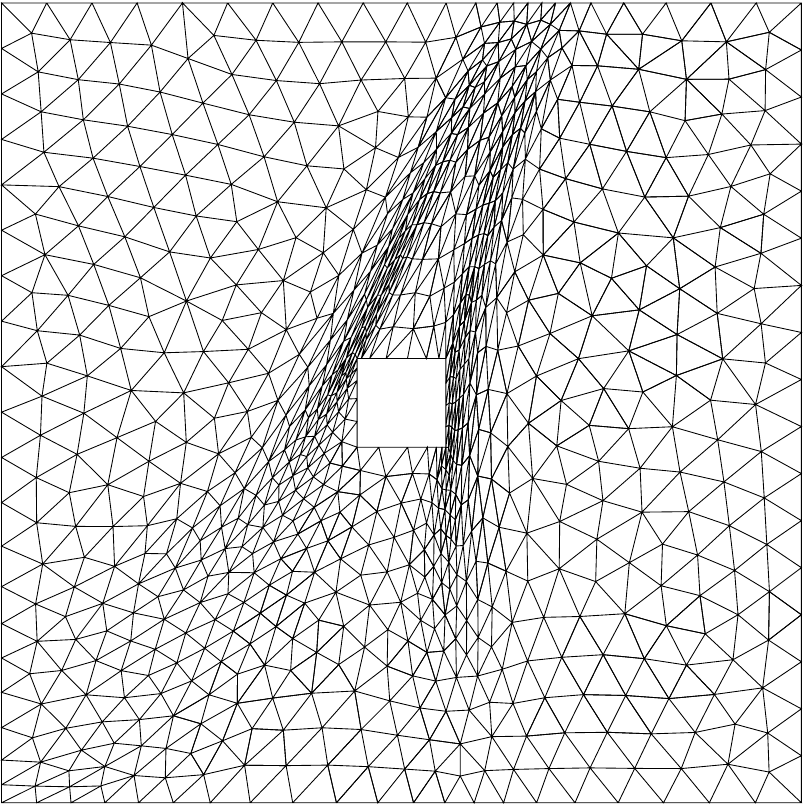}
      \qquad
      \footnotesize{ 
      \begin{tikzpicture}
      \begin{loglogaxis}[
            width=0.45\textwidth, height=0.25\textwidth,
            scale only axis,
            legend cell align=left,
            ymin=1.0e+00, ymax=5.0e+06, ytick={1.0e+01,1.0e+03,1.0e+05,1.0e+07},
            ymajorgrids, yminorticks=false, 
            xmin=2.0e+01, xmax=4.0e+05,
            xmajorgrids, xminorticks=false,
            legend style={at={(0.02,1.00)},anchor=north west,align=left,fill=none,draw=none,row sep=-0.4em}
         ]
         \addplot[color=\MyColorEsti, solid, line width=\MyLineWidth, mark size=\MyMarkSize,
               mark=o, mark options={solid}]
            table [x index=0, y index=2, col sep = space] {\DataPath/aniso1hb-05-kappa.dat};
         \addlegendentry{$\kappa(A)$ estimate}
         \addplot[color=\MyColorEsti, dashed, line width=\MyLineWidth, mark size=\MyMarkSize,
               mark=o, mark options={solid}]
            table [x index=0, y index=5, col sep = space] {\DataPath/aniso1hb-05-kappa.dat};
         \addlegendentry{$\kappa(S^{-1}AS^{-1})$ estimate}
         \addplot[color=\MyColorAlert, dotted, line width=\MyLineWidthDotted]
            table [x index=0, y index=1, col sep = space]  {\DataPath/aniso1hb-07-kappa.dat};
         \addlegendentry{$\kappa(A)$ Delaunay}
         \addplot[color=\MyColorExact, solid, line width=\MyLineWidth]  
            table [x index=0, y index=1, col sep = space] {\DataPath/aniso1hb-05-kappa.dat};
         \addlegendentry{$\kappa(A)$}
         \addplot[color=\MyColorExact, dashed, line width=\MyLineWidth] 
            table [x index=0, y index=4, col sep = space] {\DataPath/aniso1hb-05-kappa.dat};
         \addlegendentry{$\kappa(S^{-1}AS^{-1})$}
          \end{loglogaxis}
      \end{tikzpicture}
      }
      \caption{Solution-adaptive mesh, $M = M(u_h)$.}
      \label{fig:a1:5}
   \end{subfigure}
   \\[0.7em]
   \begin{subfigure}[t]{1.0\textwidth}
      \centering
      \includegraphics[height=0.2\textheight]{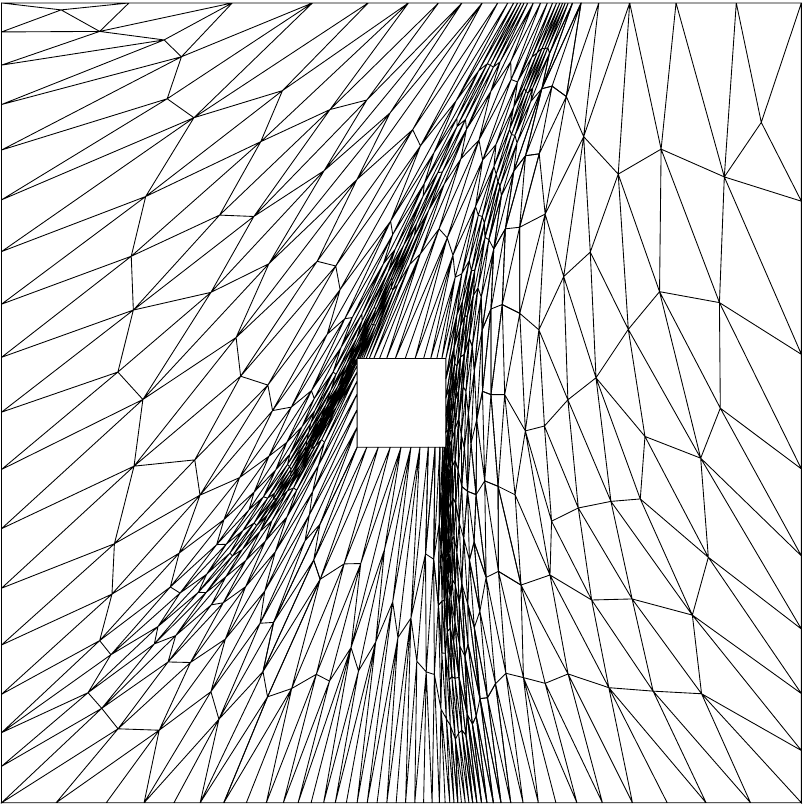}
      \qquad
      \footnotesize{ 
      \begin{tikzpicture}
      \begin{loglogaxis}[
            width=0.45\textwidth, height=0.25\textwidth,
            scale only axis,
            legend cell align=left,
            ymin=2.0e+00, ymax=2.0e+06, 
            ymin=1.0e+00, ymax=5.0e+06, ytick={1.0e+01,1.0e+03,1.0e+05,1.0e+07},
            ymajorgrids, yminorticks=false, 
            xmin=2.0e+01, xmax=4.0e+05,
            xmajorgrids, xminorticks=false,
            legend style={at={(0.02,1.00)},anchor=north west,align=left,fill=none,draw=none,row sep=-0.4em}
         ]
         \addplot[color=\MyColorEsti, solid, line width=\MyLineWidth, mark size=\MyMarkSize,
               mark=o, mark options={solid}]
            table [x index=0, y index=2, col sep = space] {\DataPath/aniso1hb-11-kappa.dat};
         \addlegendentry{$\kappa(A)$ estimate}
         \addplot[color=\MyColorEsti, dashed, line width=\MyLineWidth,mark size=\MyMarkSize,
               mark=o, mark options={solid}]
            table [x index=0, y index=5, col sep = space] {\DataPath/aniso1hb-11-kappa.dat};
         \addlegendentry{$\kappa(S^{-1}AS^{-1})$ estimate}
         \addplot[color=\MyColorAlert, dotted, line width=\MyLineWidthDotted]
            table [x index=0, y index=1, col sep = space]  {\DataPath/aniso1hb-07-kappa.dat};
         \addlegendentry{$\kappa(A)$ Delaunay}
         \addplot[color=\MyColorExact, solid, line width=\MyLineWidth]  
            table [x index=0, y index=1, col sep = space] {\DataPath/aniso1hb-11-kappa.dat};
         \addlegendentry{$\kappa(A)$}
         \addplot[color=\MyColorExact, dashed, line width=\MyLineWidth]
            table [x index=0, y index=4, col sep = space] {\DataPath/aniso1hb-11-kappa.dat};
         \addlegendentry{$\kappa(S^{-1}AS^{-1})$}
          \end{loglogaxis}
      \end{tikzpicture}
      }
      \caption{Coefficient- and solution-adaptive mesh,  $M = \theta(u_h) \D^{-1}$.}
      \label{fig:a1:11}
   \end{subfigure}
   \caption{Example~\ref{ex:a1}: condition number in dependence of $N$.}
   \label{fig:a1}
\end{figure}

\begin{example}[$d=2$, adaptive anisotropic meshes]
\label{ex:a1}

Consider an anisotropic diffusion problem studied in~\cite{Kamens11,LiHua10}.
It takes the form of BVP \eqref{eq:bvp1} but with a non-homogeneous Dirichlet boundary condition.
The domain and its outer and inner boundaries $\Gamma_{out}$ and $\Gamma_{in}$ are shown in Fig.~\ref{fig:a1_domain}.
The coefficients of the BVP, the right-hand side and the boundary data are given by
\begin{align}
\D &= 
   \begin{bmatrix}
      \cos\psi   & - \sin\psi \\ 
      \sin \psi &  \cos\psi
   \end{bmatrix}
   \begin{bmatrix}
     1000   & 0 \\ 
     0      & 1
   \end{bmatrix}
   \begin{bmatrix}
      \cos\psi    & \sin\psi \\
      - \sin\psi  & \cos\psi
   \end{bmatrix},
   \quad \psi = \pi \sin x \cos y,
   \label{eq:exam-2} \\
   f &= 0 \text{ in $\Omega=\left[0,1\right]^2\setminus\left[ \frac{4}{9},\frac{5}{9} \right]^2$},
   \quad g = 0 \text{ on $\Gamma_{out}$}, 
   \quad g = 2 \text{ on $\Gamma_{in}$}.
   \notag
\end{align}

We employ an adaptive finite element algorithm from~\cite{HuKaLa10,Kamens11} to compute the numerical solution and adaptive meshes.
The algorithm utilizes the $M$-uniform mesh approach, i.e., meshes are generated as quasi-uniform in a given metric $M$. 
For the mesh generation we use the \emph{bidimensional anisotropic mesh generator}~\cite{bamg}.

A Delaunay mesh\textemdash{}our first example (Fig.~\ref{fig:a1:7})\textemdash{}is $M$-uniform (or $M$-quasi-uniform) with respect to $M = I$.
The second mesh (Fig.~\ref{fig:a1:10}) is purely coefficient-adaptive and is defined as an $M$-uniform mesh with respect to $\D^{-1}$, i.e., $M = \D^{-1}$.
The third mesh (Fig.~\ref{fig:a1:5}) is a purely solution-adaptive mesh where $M = M(u_h)$ depends on the numerical solution $u_h$ (or, more precisely, on the hierarchical basis error estimate $e_h$).
The fourth mesh (Fig.~\ref{fig:a1:11}) represents a combination of adaptation to both the solution and the coefficients of the problem and the metric is defined as  $M = \theta(e_h) \D^{-1}$, where $\theta(e_h)$ is a scalar function depending on the error estimator $e_h$.
With such choice the shape of mesh elements is determined by the diffusion matrix while the size is controlled by the estimate of the solution error.

From Fig.~\ref{fig:a1} we can see that the smallest condition number among all four meshes is with the purely coefficient-adaptive mesh (Fig.~\ref{fig:a1:10}), which is consistent with Special Case~\ref{rem:D-uniform}.
The conditioning is better than in the case of a quasi-uniform mesh (Fig.~\ref{fig:a1:7}), confirming the observation that, depending on the problem, a quasi-uniform mesh is not necessarily the best mesh from the conditioning point of view.
For both cases, diagonal scaling does not improve the condition number significantly.
This is expected since both meshes are almost volume-uniform.
To explain why the mesh in Fig.~\ref{fig:a1:10} is (almost) volume-uniform, we recall from \eqref{Q-eq} and \eqref{eq:M:u:mesh} that an $M$-uniform mesh with respect to $\D^{-1}$ satisfies
\[
   \Abs{K} \sim \sqrt{\det(\D_K)} \quad \forall K \in \cT_h.
\]
The diffusion matrix $\D$ in \eqref{eq:exam-2} satisfies $\det(\D) = 1000$.
Thus, $\Abs{K} = const$.

The largest condition number is in the case of the purely solution-adaptive mesh (Fig.~\ref{fig:a1:5}).
This is because the mesh is not volume-uniform and its elements are not aligned with $\D^{-1}$.
Since the mesh is far from being uniform in size, scaling will have a significant impact, as it can be verified in Fig.~\ref{fig:a1:5}: the condition number after the scaling is even smaller than the condition number with Delauney meshes. 

Conditioning with a mesh that is both coefficient- and solution-adaptive (Fig.~\ref{fig:a1:11}) is not as good as in the case of the purely coefficient-adaptive mesh but better than in the case of the purely adaptive and Delaunay meshes.

In all four cases we observe that the developed estimates for the condition number of the stiffness matrix  are reasonably tight and have the same order as the exact values as $N$ increases for both unscaled and scaled cases.

\end{example}

\section{Summary and conclusions}
\label{sect:summary}

\subsection*{Mass matrix}
Our new estimate \eqref{eq:massMatrixBound:1} of the condition number of the Galerkin mass matrix is tight within a factor of $(d+2)$ from \emph{both above and below} for \emph{any mesh} with \emph{no assumptions} on mesh regularity or topology.
It this sense, it is optimal and truly anisotropic.

\subsection*{Stiffness matrix}
Lemma~\ref{lem:lambdaMax} provides an estimate of the largest eigenvalue of the stiffness matrix which is simple to compute and is tight within a factor of $(d+1)$ from \emph{both above and below} for \emph{any mesh}.
This is in contrast to many existing estimates which are proportional to the maximal number of elements meeting at a mesh point.

New bounds \eqref{eq:lambdaMin}--\eqref{eq:lambdaMinScaled} on the smallest eigenvalue and \eqref{eq:conditionNew:1D}--\eqref{eq:conditionNew:1} on the condition number of the stiffness matrix are a significant improvement in comparison to the previously available estimates.

First, the new bounds show that the conditioning of the stiffness matrix with an arbitrary (anisotropic) mesh is much better than generally assumed, especially for $d=1$ and $d=2$.

Second, the new bounds are truly anisotropic and valid for any mesh since no assumptions on the mesh regularity were made.

Third, bounds \eqref{eq:conditionNew:1D} and \eqref{eq:conditionNew} reveal what affects the conditioning.
There are three factors.
The first (base) factor $C N^{\frac{2}{d}}$ describes the direct dependence of the condition number on the \emph{number of mesh elements} and corresponds to the condition number for the Laplace operator on a uniform mesh.
The second factor describes the effects of the \emph{mesh $\D^{-1}$-nonuniformity}, i.e., the interplay between the shape and size of mesh elements and the coefficients of the BVP.\@
It is $\cO(1)$ for a coefficient-adaptive mesh, i.e., a mesh satisfying \eqref{eq:M:u:mesh}.
The third factor measures how the \emph{mesh volume-nonuniformity} further affects the condition number.
It has no effect in 1D, a minimal one in 2D, and a substantial effect in 3D and higher dimensions.
This means that even if the mesh is coefficient-adaptive and the second factor is $\cO(1)$, the mesh volume-nonuniformity can still have a significant impact on the condition number for $d \geq 3$.

Fourth, a simple diagonal scaling, such as the Jacobi preconditioning, can significantly improve the conditioning.
Bound \eqref{eq:conditionNew:1} for the condition number after scaling does not contain the factor for the mesh volume-nonuniformity.
As a consequence, for a coefficient-adaptive mesh, this bound reduces to the base factor $C N^{\frac{2}{d}}$.
In this sense, diagonal scaling eliminates the effects of the mesh volume-nonuniformity.
It can also significantly reduce the effects of the mesh nonuniformity with respect to $\D^{-1}$: the influence reduces essentially from the maximum norm to the $L^{\max \{1,\frac{d}{2}\}}$ norm of $\norm{\FDF}_2$.

Moreover, for a preconditioner that is invariant to diagonal scaling it follows that the condition number of the preconditioned stiffness matrix is typically smaller than $\kappa(S^{-1} A S^{-1})$ which in turn has a much lower bound than $\kappa (A)$ (cf. \eqref{eq:conditionNew} and \eqref{eq:conditionNew:1}). 
For example, consider  an incomplete Cholesky decomposition of $A$, 
\[
   A = L L^T + E.
\]
It follows that
\[
   S^{-1} A S^{-1} = \left(S^{-1} L\right) {\left(S^{-1}L\right)}^T + S^{-1}ES^{-1}
\]
is actually an incomplete Cholesky decomposition of $S^{-1} A S^{-1}$ since $S^{-1} L$ has the same sparsity pattern as $L$.
Then from the identity
\[
   L^{-1} A L^{-T}  =  {\left(S^{-1}L\right)}^{-1} \left(S^{-1} A S^{-1}\right) {\left(S^{-1}L\right)}^{-T}
\] 
we see that the preconditioned matrix of $A$ with preconditioner $L$ is equivalent to the preconditioned matrix of $S^{-1} A S^{-1}$ with preconditioner $S^{-1}L$.
As a result, the performance of the preconditioning technique on $A$ is the same as that on $S^{-1} A S^{-1}$ which has a much smaller condition number than $A$.
Although there is no estimate yet on the condition number of the preconditioned system, the above observation may provide a partial explanation for the good performance of  ILU preconditioners with anisotropic meshes observed in~\cite{Huang05a}.

Numerical experiments (Figs.~\ref{fig:2d:arN} and~\ref{fig:3d:ar:n12}) indicate that although the new bounds have the same order as the exact value as the number of elements increases, they may have higher asymptotic orders than the exact value as the element aspect ratio increases.
These may deserve further investigations.

Finally, we would like to point out that although the study in this paper has been done specifically for the linear finite element discretization, the approach can be generalized for higher order finite elements without major modifications.

\subsubsection*{Acknowledgement}

L. K. is very thankful to Jonathan R. Shewchuk for a fruitful discussion at the ICIAM 2011 and
for pointing out valuable references. 
The authors are very grateful to the anonymous referees for their comments and suggestions which helped to significantly improve the quality of this paper.

This work was supported in part by
   the DFG (Germany) under grants KA~3215/1--1 and KA~3215/2--1 
and
   the NSF (U.S.A.) under grants DMS–0712935 and DMS–1115118.

\bibliographystyle{abbrv}
\bibliography{KaHuXu11}

\end{document}